\newcommand{\V}{{\bf{V}}}
\begin{document}
	\title{Polynomial extension of Van der Waerden's Theorem near zero}
	
	\Author{Gh.Ghadimi$^\dag$,  M.A.Tootkaboni$^{\ddag}$\correspond}
	
	\Address{$^{\dag}$Department of Pure Mathematics,	Faculty of Mathematical Sciences\\
		$^{\ddag}$Department of Pure Mathematics,	Faculty of Mathematical Sciences}
	
	\Email{ghadirghadimi@gmail.com, tootkaboni.akbari@gmail.com}
	\Markboth{Gh.Ghadimi,  M.A.Tootkaboni}{Van der Waerden's Theorem near zero}
	
	\Abstract{Let $S$ be a dense subring of the real numbers. In this paper we prove a polynomial version of Van der Waerden's theorem near zero. In fact, we prove that if $p_1,\ldots,p_m \in \mathbb{Z}[x]$ are polynomials such that $p_i(0) = 0$ and there exists $\delta > 0$ such that $p_i(x) > 0$ for every $x \in (0,\delta)$ and for every $i=1,\ldots , m$. Then for any finite partition $\mathcal{C}$ of \( S\cap(0,1) \) and every sequence $f:\mathbb{N}\to S\cap(0,1)$ satisfying $\sum_{n=1}^\infty f(n)<\infty$, there exist a cell $C \in \mathcal{C}$, an element $a \in S$, and $F \in P_f(\mathbb{N})$ such that 
		\[
		\{ a + p_i(\sum_{t \in F} f(t)) : i = 1,2,\ldots,m \} \subseteq C.
		\]   }
	
	\Keywords{Ultrafilter, Stone-\v Cech Compactification, Minimal ideal, Piecewise syndetic set, Partial semigroup.}
	\AMS{2010}{05D10, 22A15, 54D35.}
	

\section{Introduction}
In 1996, an extension of van der Waerden's theorem to polynomials was formulated by Vitaly Bergelson and Alexander Leibman, see \cite{Bergelson1996}. As a consequence of their proof, we can say that for every finite subset $F$ of $\mathbb{Z}[x]$ without constant term and for every finite coloring $\mathcal{C}$ of $\mathbb{Z}$, the set $\{x + p(y) : p \in F\}$ is monochromatic. That is, there exist $a, b \in \mathbb{Z}$ and $C \in \mathcal{C}$ such that $\{a + P(b) : P \in F\} \subseteq C$. For more details, see \cite{Bergelson2004} and \cite{McCutcheon99}. In this paper, we prove polynomial van der Waerden's Theorem near zero. Theorem \ref{thm3.11} is our main results.

\section{Preliminary}

Let \( (S, \cdot) \) be a semigroup, and let $\beta S$ denote the collection of all ultrafilters on $S$. For any subset \( A \subseteq S \), define $\overline{A} = \{p \in \beta S \mid A \in p\}$. The collection \(\{\overline{A} \mid A \subseteq S\}\) forms a basis for a topology on \(\beta S\), with respect to which $\beta S$ is a compact Hausdorff space. This space is known as the Stone–Čech compactification of \(S\).

The operation "\(\cdot\)" on \(S\) can be uniquely extended to $\beta S$ so that \((\beta S, \cdot)\) becomes a compact right topological semigroup; that is, for any \(p \in \beta S\), the function \(r_p \colon \beta S \to \beta S\) defined by \(r_p(q) = q \cdot p\) is continuous. Moreover, \(S\) is contained in the topological center of \(\beta S\), meaning that for every \(x \in S\), the map \(\lambda_x \colon \beta S \to \beta S\) defined by \(\lambda_x(q) = x \cdot q\) is continuous.

For \(p, q \in \beta S\) and \(A \subseteq S\), we have \(A \in p \cdot q\) if and only if \(\{x \in S \mid x^{-1} \cdot A \in q\} \in p\), where \(x^{-1} \cdot A = \{y \in S \mid x \cdot y \in A\}\).

A nonempty subset \(I\) of a semigroup \((S, \cdot)\) is called a \textit{left ideal} if \(S \cdot I = \{s \cdot i : s \in S, i \in I\} \subseteq I\); a \textit{right ideal} if \(I \cdot S \subseteq I\); and a \textit{two-sided ideal} (or simply an \textit{ideal}) if it is both a left and right ideal. A \textit{minimal left ideal} is a left ideal that contains no proper left ideal. A \textit{minimal right ideal} is defined similarly. For more details, see \cite{Hindman}.

\subsection{Partial semigroups}
Now we introduce some fundamental concepts required for our work. First, we focus on the notion of a partial semigroup. For further details, see \cite{Hindman,McCutcheon}.

Let $S$ be a non-empty set, and let $*$ be a binary operation defined on a subset $D \subseteq S \times S$. The pair \((S, *)\) is called a \textit{partial semigroup} if, for all \(x, y, z \in S\), the associativity condition \((x * y) * z = x * (y * z)\) holds in the sense that if either side is defined, then so is the other, and they are equal.

We say that \(x * y\) is defined if \((x, y) \in D\). For each \(x \in S\), define:
\[
R_S(x) = \{s \in S : x * s \text{ is defined} \}, \quad L_S(x) = \{s \in S : s * x \text{ is defined} \}.
\]
When we write \(S * s = \{t * s : t \in S\}\) for some \(s \in S\), it should not be confusing; in fact, \(S * s = L_S(s) * s\).

A nonempty subset \(I \subseteq S\) is called a \textit{left ideal} of \(S\) if \(y * x \in I\) for all \(x \in I\) and \(y \in L_S(x)\). Similarly, \(I\) is a \textit{right ideal} if \(x * y \in I\) for all \(x \in I\) and \(y \in R_S(x)\). We say \(I\) is an \textit{ideal} if it is both a left and a right ideal.

A subset \(L \subseteq S\) is called a \textit{minimal left ideal} if \(L\) is a left ideal of \(S\) and, for every left ideal \(J \subseteq L\), we have \(J = L\). A \textit{minimal right ideal} is defined analogously.

An element \(p \in S\) is called \textit{idempotent} if \(p * p = p\). The set of all idempotents is denoted by \(E(S)\).

Now, we recall some basic properties of partial semigroups (see, e.g., \cite{Bergelson5}).

\begin{definition}
	Let \((S, *)\) be a partial semigroup.\\
	(a) For \(H \in P_f(S)\), define \(R_S(H) = \bigcap_{s \in H} R_S(s)\).\\
	(b) We say that \((S, *)\) is \textit{adequate} if \(R_S(H) \neq \emptyset\) for all \(H \in P_f(S)\).\\
	(c) Define \(\delta S = \bigcap_{H \in P_f(S)} \overline{R_S(H)}\).
\end{definition}

By Theorem 2.10 in \cite{McCutcheon}, \(\delta S \subseteq \beta S\) is a compact right topological semigroup.

If \((S, *)\) is a partial semigroup, then for every \(s \in S\) and \(A \subseteq S\), define
\[
s^{-1} A = \{t \in R_S(s) : s * t \in A\}.
\]

\begin{definition}
	Let \((S, *)\) be a partial semigroup.\\
	(a) For \(a \in S\) and \(q \in \overline{R_S(a)}\), define \(a * q = \{A \subseteq S : a^{-1} A \in q\}\).\\
	(b) For \(p, q \in \beta S\), define
	\[
	p * q = \{A \subseteq S : \{a \in S : a^{-1} A \in q\} \in p\}.
	\]
\end{definition}

By Lemma 2.7 in \cite{McCutcheon}, if \((S, *)\) is an adequate partial semigroup, then for every \(a \in S\) and \(q \in \overline{R_S(a)}\), we have \(a * q \in \beta S\). Moreover, if \(p \in \beta S\), \(q \in \delta S\), and \(a \in S\), then \(R_S(a) \in p * q\) whenever \(R_S(a) \in p\). In addition, for every \(p, q \in \delta S\), we have \(p * q \in \delta S\).

\begin{lemma} \label{1.10} (Lemma 1.10 in \cite{McLeod1})
	Let \(T\) be an adequate partial semigroup, and let \(S \subseteq T\) be an adequate partial semigroup under the inherited operation. Then the following statements are equivalent: \\
	(a) \(\delta S \subseteq \delta T\). \\
	(b) For all \(y \in T\), there exists \(H \in P_f(S)\) such that \(\bigcap_{x \in H} R_S(x) \subseteq R_T(y)\). \\
	(c) For all \(F \in P_f(T)\), there exists \(H \in P_f(S)\) such that \(\bigcap_{x \in H} R_S(x) \subseteq \bigcap_{x \in F} R_T(x)\).
\end{lemma}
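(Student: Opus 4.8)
The plan is to establish the cycle of implications (c) $\Rightarrow$ (b) $\Rightarrow$ (c) (so that (b) and (c) are equivalent), together with (c) $\Rightarrow$ (a) and (a) $\Rightarrow$ (c). Before starting I would fix the identification of $\beta S$ with the closed set $\overline{S} = \{p \in \beta T : S \in p\}$ inside $\beta T$, so that the inclusion $\delta S \subseteq \delta T$ is literally meaningful; under this identification, for $A \subseteq S$ the set $\overline{A}$ computed in $\beta S$ agrees with $\{p \in \beta T : A \in p\}$ because $A \subseteq S$. In particular, since each $R_S(H) \subseteq S$, an ultrafilter $p \in \beta T$ with $R_S(H) \in p$ automatically has $S \in p$, so
\[
\delta S = \{p \in \beta T : R_S(H) \in p \text{ for all } H \in P_f(S)\}, \qquad \delta T = \{p \in \beta T : R_T(F) \in p \text{ for all } F \in P_f(T)\}.
\]

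The easy implications come first. For (c) $\Rightarrow$ (b), apply (c) to $F = \{y\}$. For (b) $\Rightarrow$ (c), given $F = \{y_1, \dots, y_n\} \in P_f(T)$, choose $H_i \in P_f(S)$ with $\bigcap_{x \in H_i} R_S(x) \subseteq R_T(y_i)$ and take $H = H_1 \cup \cdots \cup H_n$; since $\bigcap_{x\in H} R_S(x) = \bigcap_{i=1}^n \bigcap_{x \in H_i} R_S(x)$, this $H$ works. For (c) $\Rightarrow$ (a), let $p \in \delta S$ and $F \in P_f(T)$; by (c) pick $H \in P_f(S)$ with $\bigcap_{x\in H} R_S(x) \subseteq \bigcap_{x\in F} R_T(x)$, note that $\bigcap_{x\in H} R_S(x) \in p$ since $p \in \delta S$, and conclude $\bigcap_{x\in F} R_T(x) \in p$ by upward closure of $p$; as $F$ was arbitrary, $p \in \delta T$.

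The substantive direction is (a) $\Rightarrow$ (c), which I would prove by contraposition. Assuming (c) fails, fix $F \in P_f(T)$ such that $\bigcap_{x\in H} R_S(x) \not\subseteq R_T(F)$ for every $H \in P_f(S)$; since $\bigcap_{x \in H} R_S(x) \subseteq S$, this says $\bigl(\bigcap_{x\in H} R_S(x)\bigr) \cap \bigl(S \setminus R_T(F)\bigr) \neq \emptyset$ for all $H$. The family $\{R_S(H) : H \in P_f(S)\} \cup \{S \setminus R_T(F)\}$ then has the finite intersection property: any finitely many of the sets $R_S(H)$ intersect in $R_S(H')$ with $H'$ the union of the corresponding $H$'s, which is nonempty because $S$ is adequate, and intersecting once more with $S \setminus R_T(F)$ stays nonempty by the choice of $F$. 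Extend this family to an ultrafilter $p \in \beta T$. Then $R_S(H) \in p$ for every $H \in P_f(S)$, so $p \in \delta S$; but $S \setminus R_T(F) \in p$ while $S \in p$, so $R_T(F) \notin p$ and hence $p \notin \delta T$. Thus $\delta S \not\subseteq \delta T$, i.e. (a) fails.

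Overall the argument is essentially bookkeeping about finite intersections, and I do not expect a genuine obstacle; the one point deserving care — which I would flag as the main subtlety — is the identification of $\beta S$ as a subspace of $\beta T$ and the resulting description of $\delta S$ and $\delta T$ as sets of ultrafilters on $T$, since this is what makes the inclusion in (a) meaningful and guarantees that the ultrafilter produced in the last step, which contains $S$, really is a point of $\delta S \setminus \delta T$.
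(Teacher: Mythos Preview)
The paper does not supply its own proof of this lemma; it merely cites it as Lemma~1.10 in \cite{McLeod1}. So there is nothing in the paper to compare your argument against. That said, your proof is correct and is essentially the standard one: the equivalence of (b) and (c) is routine set manipulation, (c)~$\Rightarrow$~(a) follows immediately from upward closure of ultrafilters, and your contrapositive for (a)~$\Rightarrow$~(c) via the finite intersection property is exactly the expected approach. Your remark about identifying $\beta S$ with $\overline{S}\subseteq\beta T$ is the right thing to flag, and you handle it properly.
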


\begin{definition} \label{1.11}
	Let \( T \) be a partial semigroup. Then \( S \) is an \textbf{adequate partial subsemigroup} of \( T \) if and only if \( S \subseteq T \),  
	\( S \) is an adequate partial semigroup under the inherited operation, and for all \( y \in T \), there exists \( H \in P_f(S) \) such that  
	$\bigcap_{x \in H} R_S(x) \subseteq R_T(y)$.	  
\end{definition}

\begin{theorem} \label{1.23}(Lemma 1.23 in \cite{McLeod1})
	Let $T$ be an adequate partial semigroup, let $S$ be an adequate partial subsemigroup of $T$ and assume that $S$ is an ideal of $T$. Then $\delta S$ is an ideal of $\delta T$. In particular, $K(\delta S) = K(\delta T)$.
\end{theorem}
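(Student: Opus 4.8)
The plan is to first establish the inclusion $\delta S \subseteq \delta T$, then verify the right- and left-ideal conditions separately, and finally deduce $K(\delta S)=K(\delta T)$ from a standard fact about smallest ideals. Since $S$ is an adequate partial subsemigroup of $T$, Definition~\ref{1.11} is exactly condition~(b) of Lemma~\ref{1.10}, so $\delta S \subseteq \delta T$; as $\delta T$ is a semigroup, any product $p*q$ with $p\in\delta S$, $q\in\delta T$ (or vice versa) is understood to be taken in $\delta T$. Two elementary observations will be used repeatedly: (i)~for every $a \in T$ and every $r \in \delta T$ one has $R_T(a)\in r$ (apply the definition of $\delta T$ to $H=\{a\}$); and (ii)~since $S$ is a right ideal of $T$, $R_S(s)=R_T(s)\cap S$ for each $s \in S$, hence $R_S(H)=R_T(H)\cap S$ for all $H \in P_f(S)$. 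Observation~(ii) shows that, to prove an element $r \in \delta T$ actually lies in $\delta S$, it is enough to prove $S \in r$: indeed $r\in\delta T$ forces $R_T(H)\in r$ for all $H\in P_f(S)\subseteq P_f(T)$, and then $R_S(H)=R_T(H)\cap S \in r$.

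For the right-ideal condition, take $p \in \delta S$ and $q \in \delta T$; I would show $S \in p*q$. Since $S \in p$, it suffices to check that $a^{-1}S \in q$ for every $a \in S$. For such $a$, the right-ideal property of $S$ in $T$ gives $a*t \in S$ for every $t \in R_T(a)$, so $a^{-1}S = R_T(a)$, which belongs to $q$ by~(i). Hence $\{a \in T : a^{-1}S \in q\} \supseteq S \in p$, so $S \in p*q$, and by the reduction above $p*q \in \delta S$.

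For the left-ideal condition, take $q \in \delta T$ and $p \in \delta S$, and show $S \in q*p$ by proving $a^{-1}S \in p$ for \emph{every} $a \in T$. Given $a \in T$, Definition~\ref{1.11} provides $H \in P_f(S)$ with $R_S(H) \subseteq R_T(a)$. If $t \in R_S(H)$ then $t \in S$ and $a*t$ is defined (as $t \in R_T(a)$), so the left-ideal property of $S$ in $T$ forces $a*t \in S$, i.e.\ $t \in a^{-1}S$; thus $R_S(H) \subseteq a^{-1}S$, and since $R_S(H) \in p$ we get $a^{-1}S \in p$. Therefore $\{a \in T : a^{-1}S\in p\}=T \in q$, whence $S \in q*p$ and $q*p \in \delta S$. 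This proves that $\delta S$ is a two-sided ideal of $\delta T$. Finally, $\delta T$ and $\delta S$ are nonempty compact right topological semigroups (adequacy together with \cite{McCutcheon}), hence each possesses a smallest two-sided ideal; and it is standard that if $I$ is an ideal of a semigroup with smallest ideal $K$, then $K \subseteq I$ and $K(I)=K$ (see \cite{Hindman}). Taking $I = \delta S$ yields $K(\delta S)=K(\delta T)$.

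The step I expect to be the main obstacle is the left-ideal condition. In the right-ideal case $a^{-1}S$ is literally all of $R_T(a)$, but in the left-ideal case it need not be, so one genuinely has to use the defining feature of an \emph{adequate} partial subsemigroup --- the existence of $H\in P_f(S)$ with $R_S(H)\subseteq R_T(a)$ --- to see that $a^{-1}S$ still lies in every $p\in\delta S$. A secondary point demanding care is the passage between $R_S$ and $R_T$ and the resulting reduction of "$r\in\delta S$" to "$S\in r$", which quietly relies on $S$ being a right ideal of $T$.
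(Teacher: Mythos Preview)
The paper does not supply its own proof of this theorem; it merely quotes the result as Lemma~1.23 of \cite{McLeod1} and uses it as a black box. There is therefore nothing in the paper to compare your argument against.

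That said, your proposal is correct. The reduction ``$r\in\delta T$ and $S\in r$ $\Rightarrow$ $r\in\delta S$'' is valid once one identifies $\beta S$ with $\overline{S}\subseteq\beta T$, and your observation~(ii) is exactly what justifies this reduction. The right-ideal step is clean; the left-ideal step is the substantive one, and you handle it properly by invoking Definition~\ref{1.11} to produce $H\in P_f(S)$ with $R_S(H)\subseteq R_T(a)$ and then using that $S$ is a left ideal of $T$. For the final sentence you might be slightly more explicit: once $\delta S$ is an ideal of $\delta T$, one has $K(\delta T)\subseteq\delta S$; since $K(\delta T)$ is then an ideal of $\delta S$ one gets $K(\delta S)\subseteq K(\delta T)$, and equality follows because $K(\delta T)$ is simple (so the ideal $K(\delta S)$ of $K(\delta T)$ must be all of $K(\delta T)$). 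This is the standard argument behind the reference you cite to \cite{Hindman}.
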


\begin{definition}{\label{2.6}}
	Let $S\subseteq (0,1)$. For every $x,y\in S$, we define operation $"\dot{+}"$ on $S$ as follows:
	\[
	x\dot{+}y=x+y\,\,\,\,\,\,\,\text{ if and only if }\,\,\,\,\,\,\,x+y\in S.
	\]
	If $(S,\dot{+})$ is an adequate partial semigroup and $S$ is dense in $(0,1)$, then $(S,\dot{+})$ is called partially near zero semigroup. 
\end{definition}
\begin{lemma}
	Let $T$ be a dense subsemigroup of $((0,+\infty),+)$ and let $S=T\cap (0,+\infty)$. Then the following statements hold:\\
	(a) $(S,\dot{+})$ is a partial semigroup.\\
	(b) For every $x\in S$, $R_S(x)=(0,1-x)\cap S$.\\
	(c) $(S,\dot{+})$ is partially near zero semigroup.
\end{lemma}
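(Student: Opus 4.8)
\emph{Proof proposal.} The plan is to check the three assertions in turn, using only two elementary facts: addition on $\mathbb{R}$ is associative and $T$ is closed under it, and if $a_1,\dots,a_k>0$ with $a_1+\cdots+a_k<1$ then every partial sum again lies in $(0,1)$. Throughout, $S=T\cap(0,1)$ (which is what makes part~(b) meaningful).

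For~(a) I would fix $x,y,z\in S$ and show that $(x\dot{+}y)\dot{+}z$ is defined precisely when $x\dot{+}(y\dot{+}z)$ is, both sides then equalling $x+y+z$. If $(x\dot{+}y)\dot{+}z$ is defined, then $x+y\in S$ and $(x+y)+z\in S$, so in particular $x+y+z\in T\cap(0,1)$; since $x>0$ this forces $y+z<1$, and since $T$ is a subsemigroup $y+z\in T$, hence $y+z\in T\cap(0,1)=S$, so $y\dot{+}z$ is defined, and then $x+(y+z)=x+y+z\in S$ gives that $x\dot{+}(y\dot{+}z)$ is defined and equals $x+y+z$. The reverse implication is symmetric, so $(S,\dot{+})$ is a partial semigroup.

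For~(b), fix $x\in S$. By definition $R_S(x)=\{s\in S: x\dot{+}s\text{ is defined}\}=\{s\in S: x+s\in S\}$. If $s\in S$ then $s\in T$, so $x+s\in T$ automatically, and thus $x+s\in S=T\cap(0,1)$ is equivalent to $x+s<1$, i.e.\ $s<1-x$ (positivity of $s$ and of $x+s$ being automatic). Conversely, any $s\in T$ with $0<s<1-x$ lies in $T\cap(0,1)=S$ and satisfies $x+s\in T\cap(0,1)=S$. Hence $R_S(x)=(0,1-x)\cap T=(0,1-x)\cap S$. For~(c), by Definition~\ref{2.6} it suffices to see that $(S,\dot{+})$ is adequate and $S$ is dense in $(0,1)$; density is immediate from density of $T$ in $(0,+\infty)$. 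Given $H\in P_f(S)$, set $m=\max H<1$; by~(b), $R_S(H)=\bigcap_{x\in H}R_S(x)=\bigcap_{x\in H}\bigl((0,1-x)\cap S\bigr)=(0,1-m)\cap S$, which is nonempty because $1-m>0$ and $T$ is dense in $(0,+\infty)$. Thus $(S,\dot{+})$ is an adequate partial semigroup, dense in $(0,1)$, i.e.\ a partially near zero semigroup.

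I do not expect a genuine obstacle here; the lemma is essentially bookkeeping. The one place deserving care is the ``if either side is defined'' clause of the associativity axiom in~(a): one cannot simply invoke associativity of $+$, but must combine it with closure of $T$ under addition and the observation that a partial sum of positive reals whose total lies in $(0,1)$ is itself in $(0,1)$.
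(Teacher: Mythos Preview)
Your proof is correct and fills in exactly the details the paper omits: the paper's own proof is simply ``It is obvious.'' You also correctly spotted (and fixed) the evident typo in the statement, reading $S=T\cap(0,1)$ rather than $S=T\cap(0,+\infty)$, without which part~(b) would be meaningless.
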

\begin{proof}
	It is obvious.
\end{proof}
In \cite{ba}, \cite{Hin-Lead}, \cite{Ghosh}, and \cite{Pa}, the relation between piecewise syndetic sets and \( J \)-sets has been studied separately for partial semigroups and for ultrafilters near zero. Here, we present an alternative proof based on the algebraic properties of adequate partial semigroups.
\begin{definition}
	Let $S\subseteq (0,1)$. A sequence $f:\mathbb{N}\to S$ is called a partially sequence if and only if for each $H\in P_f(\mathbb{N})$, $\sum_{t\in H}f(t) \in (0,1)$. 
\end{definition}

It is obvious that, if $\sum_{n\in\mathbb{N}}f(n)<1$, then $f:{\mathbb{N}}\to S$ is a partially sequence. For every partially sequence $f$ in $S$ and for every $F\in P_f(\mathbb{N})$, we define $T_F^f:(0,1)\to (0,1)$ by $T_F^f(s) = s \dotplus \sum_{t\in F}f(t)$ for every $s\in R_S(\sum_{t\in F}f(t))$. We say that $A\subseteq (0,1)$ is a partially $J$-set if for every finite subset $\{f_1,\dots,f_k\}$ of partially sequences, there exist $F\in P_f(\mathbb{N})$ and $a\in \bigcap_{i=1}^k R_S(\sum_{t\in F}f_i(t))$ such that 
\[
(T_F^{f_1}(a),\dots,T_F^{f_k}(a)) \in \times_{i=1}^k A. 
\] 
Since $((0,1),\dotplus)$ is a commutative adequate partial semigroup, $(\times_{i=1}^k(0,1),\dotplus)$ is a commutative adequate partial semigroup for every $k\in\mathbb{N}$, where $\dotplus$ on $\times_{i=1}^k(0,1)$ is defined by 
\[
(x_1,\dots,x_k) \dotplus (y_1,\dots,y_k) = (x_1 \dotplus y_1, \dots, x_k \dotplus y_k),
\]
for every $(x_1,\dots,x_k),(y_1,\dots,y_k)\in\times_{i=1}^k(0,1)$. 

\begin{definition}
	Fix $k\in\mathbb{N}$ and let $f_1,f_2,\dots,f_k$ be partial sequences in $(0,1)$.\\
	(a) Define $\Delta^k_J = \{(s,s,\dots,s) \in \times_{i=1}^k J : s \in J\}$ for every non-empty set $J$.\\	
	(b) Define 
	\[
	E = \left\{ (T_F^{f_1}(a),\dots,T_F^{f_k}(a)) : a \in \bigcap_{i=1}^k R_S\left(\sum_{t\in F}f_i(t)\right), F \in P_f(\mathbb{N}) \cup \{\emptyset\} \right\}
	\]
	and 
	\[
	I = \left\{ (T_F^{f_1}(a),\dots,T_F^{f_k}(a)) : F \in P_f(\mathbb{N}), a \in \bigcap_{i=1}^k R_S\left(\sum_{t\in F}f_i(t)\right) \right\},
	\]	 
	where $T_\emptyset^f(a) = a$ for every $a\in S$ and for every $f:{\mathbb{N}}\to S$.
\end{definition}

\begin{theorem} \label{recurrent}
	Let $f_1,f_2,\dots,f_k$ be partially sequences. Then the following statements hold:\\
	(a) $(E,\dotplus)$ is an adequate partial subsemigroup of $\times_{i=1}^k(0,1)$, where 
	\[
	(T_F^{f_1}(a),\dots,T_F^{f_k}(a)) \dotplus (T_G^{f_1}(b),\dots,T_G^{f_k}(b)) = (T_{F\cup G}^{f_1}(a \dotplus b),\dots,T_{F\cup G}^{f_k}(a \dotplus b)),
	\]  
	when $F\cap G=\emptyset$ and $T_F^{f_i}(b) \in R_S(T_F^{f_i}(a))$ for every $i=1,2,\dots, k$. Moreover, if $Y=\times_{i=1}^k\delta (0,1)$, then $\delta E = \bigcap_{x\in E} cl_Y R_E(x)$ is a compact subsemigroup of $Y$.\\
	(b) $(I,\dotplus)$ is an adequate partial subsemigroup of $E$. Also, $\delta I$ is an ideal of $\delta E$ and $K(\delta I) = K(\delta E)$.\\
	(c) $K(\delta E) = K\left( \times_{i=1}^k\delta (0,1) \right) \cap \delta E$.	
\end{theorem}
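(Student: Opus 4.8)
The plan is to prove the three parts in order, deducing the statements about $\delta E$ and $\delta I$ from the partial-semigroup machinery recalled in Section 2 (Lemma \ref{1.10}, Definition \ref{1.11}, Theorem \ref{1.23}), and deducing (c) from the standard fact that a compact subsemigroup of a compact right topological semigroup that meets the smallest ideal has smallest ideal equal to that intersection.

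\emph{Part (a).} It is convenient to view a point of $E$ together with the data $(F,a)$ that produces it, so that the operation in the statement is literally $(F,a)\dotplus(G,b)=(F\cup G,a\dotplus b)$, defined when $F\cap G=\emptyset$ and $a\dotplus b\in\bigcap_{i=1}^k R_S\!\big(\sum_{t\in F\cup G}f_i(t)\big)$; then associativity reduces to associativity of $\cup$ and of $\dotplus$ on $S$, and the identity $\sum_{t\in F\cup G}f_i(t)=\sum_{t\in F}f_i(t)+\sum_{t\in G}f_i(t)$ shows the formula is consistent with the coordinatewise operation on $\times_{i=1}^k(0,1)$, so $E$ becomes a partial subsemigroup of $\times_{i=1}^k(0,1)$. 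For \emph{adequacy}: given $H\in P_f(E)$, only finitely many indices occur among the index-sets of points of $H$, so choose a finite nonempty $G\subseteq\mathbb N$ avoiding them all with $\sum_{t\in G}f_i(t)$ small for every $i$ — possible because each $\sum_n f_i(n)<\infty$ — and $b\in S$ small; then $(T_G^{f_1}(b),\dots,T_G^{f_k}(b))$ lies in $\bigcap_{x\in H}R_E(x)$. For the \emph{adequate partial subsemigroup} condition, given $y=(y_1,\dots,y_k)$ use the single diagonal point $x_0=(c,\dots,c)\in E$ (the case $F=\emptyset$) with $\max_i y_i<c<1$: every $s\in R_E(x_0)$ then has all coordinates below $1-c<1-y_i$, so $R_E(\{x_0\})\subseteq R_{\times_{i=1}^k(0,1)}(y)$. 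Lemma \ref{1.10} (via Definition \ref{1.11}) now gives $\delta E\subseteq Y$, Theorem 2.10 of \cite{McCutcheon} gives that $\delta E$ is a compact right topological semigroup, and the identification of $\delta E$ with $\bigcap_{x\in E}cl_Y R_E(x)$ is routine.

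\emph{Part (b).} Adequacy of $I$ and the fact that $I$ is an adequate partial subsemigroup of $E$ are proved as in (a), taking the auxiliary points in $I$. For adequacy the tail point above already has nonempty index-set. For the subsemigroup condition, given $y=(T_F^{f_1}(a),\dots,T_F^{f_k}(a))\in E$ I would use the point $x_0=(T_{\{n_0\}}^{f_1}(c),\dots,T_{\{n_0\}}^{f_k}(c))\in I$ with $n_0$ large and $c$ close to $1$ (so $T_{\{n_0\}}^{f_i}(c)\ge T_F^{f_i}(a)$ for each $i$) together with the point $x_F=(T_F^{f_1}(c'),\dots,T_F^{f_k}(c'))\in I$ for a suitable $c'$: if $s\in R_I(x_0)\cap R_I(x_F)$ then membership in $R_I(x_0)$ forces each coordinate of $s$ below $1-T_F^{f_i}(a)$, membership in $R_I(x_F)$ forces the index-set of $s$ to be disjoint from $F$, and these two facts together make $y\dotplus s$ defined in $E$. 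Next, $I$ is an ideal of $E$: if $x\in I$ has nonempty index-set $F$ and $x\dotplus y$ is defined, then $x\dotplus y$ has index-set $F\cup G\supseteq F$, hence lies in $I$; by commutativity $I$ is two-sided. Theorem \ref{1.23} now gives that $\delta I$ is an ideal of $\delta E$ and $K(\delta I)=K(\delta E)$.

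\emph{Part (c).} Let $\Delta=\{(a,\dots,a):a\in S\}\subseteq E$ (the $F=\emptyset$ points). As in (a) one checks $\Delta$ is an adequate partial subsemigroup of $E$, so $\delta\Delta\subseteq\delta E$ by Lemma \ref{1.10}; since $R_\Delta((a,\dots,a))=\{(s,\dots,s):s\in R_S(a)\}$ one gets $\delta\Delta=\{(q,\dots,q):q\in\delta(0,1)\}$, and the isomorphism $a\mapsto(a,\dots,a)$ carries $K(\delta(0,1))$ onto $K(\delta\Delta)$. Since $K(Y)=K\!\big(\times_{i=1}^k\delta(0,1)\big)=\times_{i=1}^k K(\delta(0,1))$ — the usual description of the smallest ideal of a finite product of compact right topological semigroups — any $q\in K(\delta(0,1))$ gives $(q,\dots,q)\in\delta\Delta\cap K(Y)\subseteq\delta E\cap K(Y)$, so $\delta E\cap K(Y)\neq\emptyset$. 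As $\delta E$ is a compact subsemigroup of $Y$, the fact quoted at the outset gives $K(\delta E)=\delta E\cap K(Y)=K\!\big(\times_{i=1}^k\delta(0,1)\big)\cap\delta E$, as required. The step I expect to be the main obstacle is exactly the book-keeping in the "adequate partial subsemigroup" verifications — making the partial operation on $E$ and $I$ unambiguous given that a point may arise from several pairs $(F,a)$, and then producing, for each instance of Definition \ref{1.11}, an \emph{explicit} finite set $H$ whose intersection of right sets is pinned down (for $I$ this needs a "near-$1$" point to shorten the tails of candidate $s$ together with an index-$F$ point to separate the index-sets); once those structural facts are secured, everything about $\delta E$, $\delta I$, and their smallest ideals is formal.
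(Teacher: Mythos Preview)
Your proof is correct and follows the same route as the paper: the diagonal point $(c,\dots,c)$ with $c>\max_i y_i$ for the adequate-partial-subsemigroup condition in (a), Theorem \ref{1.23} applied to the ideal $I$ of $E$ for (b), and a diagonal ultrafilter $(p,\dots,p)\in K(Y)=\times_{i=1}^k K(\delta(0,1))$ together with Theorem 1.65 of \cite{Hindman} for (c). The only divergence is that the paper treats the partial operation on $E$ as the coordinatewise one inherited from $\times_{i=1}^k(0,1)$ (writing $R_E((x_1,\dots,x_k))=\times_{i=1}^k(0,1-x_i)$ and reading the displayed formula with $F\cup G$ merely as a witness that $E$ is closed), which dissolves your self-identified obstacle about multiple $(F,a)$-representations and lets (b) be dismissed in one line; your more careful disjointness-based version, with the two-point set $\{x_0,x_F\}$ in (b), also goes through but is more work than the authors do.
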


\begin{proof}
	\begin{enumerate}
		\item[(a)] Naturally for every $(x_1,\dots,x_k)\in\times_{i=1}^k(0,1)$, we have 
		\begin{align*}
		R_{E}((x_1,\dots,x_k)) 
		&= \left\{ y \in \times_{i=1}^k(0,1) : (x_1,\dots,x_k) \dotplus y \text{ is well defined} \right\} \\
		&= \times_{i=1}^k (0, 1 - x_i) \\
		&\supseteq \times_{i=1}^k (0, 1 - x), 
		\end{align*}
		where $x = \min\{x_1,\dots,x_k\}$. So, $(E,\dotplus)$ is an adequate partial semigroup. Now, pick $x=(x_1,\dots,x_k)\in\times_{i=1}^k(0,1)$, and let\linebreak $\delta = \max\{x_1,x_2,\dots,x_k\}$. For $(\delta,\dots,\delta)\in E$ we have $\times_{i=1}^k(0,1-\delta) \subseteq R_{E}((x_1,\dots,x_k))$, and hence $E$ is an adequate partial subsemigroup of $\times_{i=1}^k (0,1)$ by Definition \ref{1.11}. Therefore $\delta E = \bigcap_{x\in E} \overline{R_{E}(x)}$ is a compact right topological semigroup and $\delta E$ is a compact subsemigroup of $\times_{i=1}^k\delta(0,1)$.
		
		\item[(b)] Similar to part (a), it can be readily observed that $I$ is an adequate partial subsemigroup of $E$ and also $I$ is an ideal of $E$. By Theorem \ref{1.23}, $\delta I$ is an ideal of $\delta E$ and $K(\delta I) = K(\delta E)$.
		
		\item[(c)] Let $Y = \times_{i=1}^k\delta (0,1)$, then $K(Y) = \times_{i=1}^k K(\delta (0,1))$ by Theorem 2.23 in \cite{Hindman}. Pick $\overline{p} = (p,\dots,p) \in K(Y)$, and let $U$ be a neighborhood of $\overline{p}$. For every $x \in E$ we have $U \cap R_{E}(x) \neq \emptyset$, and hence $\overline{p} \in \overline{R_{E}(x)} \neq \emptyset$. So $\overline{p} \in K(Y) \cap \delta E$. Since $K(Y) \cap \delta E \neq \emptyset$, $K(\delta E) = K(Y) \cap \delta E$ by Theorem 1.65 in \cite{Hindman}, which completes the proof. 
	\end{enumerate}
\end{proof}

Now, we establish Theorem 14.8.3 from \cite{Hindman}, utilizing the results previously derived for partial semigroups. The proof follows the same structure as the proof in \cite{Hindman}, with the only difference being that we utilize the concept of partial semigroups. A subset \( A \) of an adequate partial semigroup \( S \) is said to be \textit{piecewise syndetic} if and only if \( \overline{A} \cap K(\delta S) \neq \emptyset \); see Definition 3.3 in \cite{McLeod1}.

\begin{theorem} \label{Jset}
	Let \( (S,\dot{+}) \) be a partially near zero semigroup, and let \( A \) be a partially piecewise syndetic subset of \( S \). Then, \( A \) is a partially \( J \)-set.
\end{theorem}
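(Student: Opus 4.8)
The plan is to mimic the classical argument (Theorem 14.8.3 in \cite{Hindman}) that a piecewise syndetic set contains a $J$-set, but carried out inside the partial-semigroup framework developed above. Fix partially sequences $f_1,\dots,f_k$ in $S$ and form the partial semigroups $E$ and $I$ from Theorem \ref{recurrent}. The set $\bigtimes_{i=1}^k A$ lives inside $\bigtimes_{i=1}^k(0,1)$, and membership of $A$ in the conclusion will be witnessed by finding a point of $\bigtimes_{i=1}^k A$ that also lies in $I$: such a point is exactly of the form $(T_F^{f_1}(a),\dots,T_F^{f_k}(a))$ with $F\in P_f(\mathbb{N})$ and $a\in\bigcap_{i=1}^k R_S(\sum_{t\in F}f_i(t))$, which is precisely what it means for $A$ to be a partially $J$-set. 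So the whole task is to locate a point of $\overline{\bigtimes_{i=1}^k A}$ inside $I$.

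First I would unpack piecewise syndeticity: since $A$ is partially piecewise syndetic, $\overline{A}\cap K(\delta S)\neq\emptyset$, so there is a minimal left ideal $L$ of $\delta S$ and an idempotent-free analysis is not needed — I just take $p\in\overline{A}\cap K(\delta S)$. The key transfer step is to move from $\delta S$ up to $Y=\bigtimes_{i=1}^k\delta(0,1)$ via the diagonal: the diagonal ultrafilter $\overline{p}=(p,\dots,p)$ lies in $K(Y)$ because $K(Y)=\bigtimes_{i=1}^k K(\delta(0,1))$ (Theorem 2.23 in \cite{Hindman}) and because $\delta(0,1)$ and $\delta S$ share the same smallest ideal — here I would invoke Theorem \ref{1.23} applied to $S$ as an ideal of $(0,1)$ (via the Lemma that $(S,\dotplus)$ is a partially near zero semigroup) to get $K(\delta S)=K(\delta(0,1))$, hence $p\in K(\delta(0,1))$ and $\overline{p}\in K(Y)$. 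By Theorem \ref{recurrent}(c), $\overline{p}\in K(Y)\cap\delta E=K(\delta E)$, and by Theorem \ref{recurrent}(b), $K(\delta E)=K(\delta I)\subseteq\delta I\subseteq\overline{I}$. Therefore $\overline{p}\in\overline{I}$, i.e. $I\in\overline{p}$.

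Now I combine the two facts: $\bigtimes_{i=1}^k A\in\overline{p}$ (because $A\in p$ componentwise, using the definition of the product ultrafilter on the product partial semigroup) and $I\in\overline{p}$. Intersecting, $\bigl(\bigtimes_{i=1}^k A\bigr)\cap I\neq\emptyset$. Pick an element of this intersection; since it lies in $I$ it has the form $(T_F^{f_1}(a),\dots,T_F^{f_k}(a))$ for some $F\in P_f(\mathbb{N})$ and some $a\in\bigcap_{i=1}^k R_S(\sum_{t\in F}f_i(t))$, and since it lies in $\bigtimes_{i=1}^k A$ we get $T_F^{f_i}(a)\in A$ for every $i$. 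As the $f_i$ were an arbitrary finite family of partially sequences, this is exactly the definition of $A$ being a partially $J$-set, completing the proof.

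The main obstacle I anticipate is the bookkeeping around the product ultrafilter $\overline{p}$: one must check carefully that $\overline{p}=(p,\dots,p)$ really belongs to $\delta Y$ (so that the product-semigroup operations and Theorem \ref{recurrent}(c) apply), that $A\in p$ genuinely implies $\bigtimes_{i=1}^k A\in\overline{p}$ in the partial-semigroup sense, and — most delicately — that $K(\delta S)=K(\delta(0,1))$, which requires verifying the hypotheses of Theorem \ref{1.23}: that $S$ is an adequate partial subsemigroup of $(0,1)$ (the cofinality condition on $R_S$ versus $R_{(0,1)}$, which holds because $S$ is dense in $(0,1)$) and that $S$ is an ideal of $((0,1),\dotplus)$ (immediate since $T$ is a subring, hence closed under addition, so $x\dotplus s\in S$ whenever $s\in S$). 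Once these structural checks are in place the argument is a short chain of inclusions, so I would present those verifications first and then the three-line conclusion.
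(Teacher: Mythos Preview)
Your core argument is correct and is in fact a bit more economical than the paper's. Both proofs place $\overline{p}=(p,\dots,p)$ in $K(\delta E)$; after that the paper invokes an idempotent $\eta\in K(\delta E)$ with $\eta\dotplus\overline{p}=\overline{p}$ and runs the shift-set computation, whereas you simply observe $K(\delta E)=K(\delta I)\subseteq\delta I\subseteq\operatorname{cl}_Y I$, so that every basic neighborhood $\times_{i=1}^k\overline{C_i}$ of $\overline{p}$ meets $I$, and taking $C_i=A$ gives $\bigl(\times_{i=1}^k A\bigr)\cap I\neq\emptyset$. That shortcut is legitimate: for any $x_0\in I$ one has $R_I(x_0)\subseteq I$, hence $\delta I\subseteq\operatorname{cl}_Y R_I(x_0)\subseteq\operatorname{cl}_Y I$. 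The paper's idempotent route yields the marginally stronger conclusion that the base point can be taken in $A$, but this is not required by the definition of a partially $J$-set.

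There is, however, a genuine error in the bookkeeping you flagged yourself. The claim that $S$ is an ideal of $((0,1),\dotplus)$ is false: closure under addition makes $S$ a sub(partial)semigroup, not an ideal. For $x\in(0,1)\setminus S$ and $s\in S$ one typically has $x\dotplus s\notin S$ (take $S=\mathbb{Q}\cap(0,1)$ and $x$ irrational). So Theorem~\ref{1.23} does not give $K(\delta S)=K(\delta(0,1))$, and you cannot pass from $p\in K(\delta S)$ to $p\in K(\delta(0,1))$ that way. The fix is not to pass to $(0,1)$ at all: run Theorem~\ref{recurrent}(c) with $Y=\times_{i=1}^k\delta S$ in place of $\times_{i=1}^k\delta(0,1)$. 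Since the $f_i$ take values in $S$, one has $E\subseteq\times_{i=1}^k S$, and the proof of part~(c) goes through verbatim to give $K(\delta E)=K\bigl(\times_{i=1}^k\delta S\bigr)\cap\delta E$. This is exactly the equality the paper uses in its proof of the theorem (note that the paper writes $\times_{i=1}^k\delta S$ there, not $\times_{i=1}^k\delta(0,1)$). With that adjustment your argument is complete.
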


\begin{proof}
	Since $A$ is a piecewise syndetic subset of $S$, pick $p \in \overline{A} \cap K(\delta S)$. Let $\{f_1,\dots,f_k\}$ be a partially sequence and define  
	\[
	E = \left\{ (T_F^{f_1}(a),\dots,T_F^{f_k}(a)) : a \in S, F \in P_f(\mathbb{N}) \cup \{\emptyset\} \right\},
	\]
	where $T_\emptyset^f(a) = a$ for every $a \in S$ and $f:{\mathbb{N}}\to S$. Then $E$ is an adequate partially semigroup and so $\overline{p} = (p,\dots,p) \in K\left( \times_{i=1}^k\delta S \right) \cap \delta E = K(\delta E)$ by Theorem \ref{recurrent}. So, by Theorem 2.8 in \cite{Hindman}, there exists an idempotent $\eta \in K(\delta E)$ such that $\eta \dotplus \overline{p} = \overline{p}$. Since $\times_{i=1}^k A \in \overline{p}$, we have 
	\[
	\mathcal{G} = \left\{ x \in E : -x \dotplus \left( \times_{i=1}^k A \right) \in \overline{p} \right\} \in \eta,
	\]
	and hence 
	\[
	\bigcap_{i=1}^k \left( -\left(s \dotplus \sum_{t\in F} f_i(t)\right) \dotplus A \right) \cap A \in p
	\]
	for some $s \in S$ and $F \in P_f(\mathbb{N})$. So there exist $a \in A$, $s \in S$ and $F \in P_f(\mathbb{N})$ such that $a \dotplus s \dotplus \sum_{t\in F} f_i(t) \in A$ for every $i=1,\dots,k$. In fact,
	\[
	(T_F^{f_1}(a \dotplus s),\dots,T_F^{f_k}(a \dotplus s)) \in \times_{i=1}^k A.
	\] 
\end{proof}

\section{\bf Symbolic one Variable Polynomials Space}
In this section, we introduce the space of symbolic polynomials. We will show that every polynomial with real coefficients is the image of a symbolic polynomial.

For \( k \in \mathbb{N} \), we fix the symbols \( 1_0, 1_1, \ldots, 1_k \), and construct strings of the form \( (1_0)( 1_{1})( 1_{2}) \ldots (1_{i}) \), where $0<i\leq k$. 
\begin{definition}
	
	(a) \( \Gamma_k = \{ (a_0 1_0)(a_1 1_1) \cdots (a_i 1_i) : i \in [1, k] \text{ and for } t \in [0,i],\ a_t \in \mathbb{R} \} \), where $[j,i]=\{j,j+1,\ldots,i\}$ for every $j<i$ and $i,j\in\mathbb{Z}$.
	
	(b) If \( x = (a_0 1_0)(a_1 1_1) \cdots (a_i 1_i) \) and \( y = (b_0 1_0)(b_1 1_1) \cdots (b_j 1_j) \), are members of \( \Gamma_k \), then 
	$
	x = y $ if and only if $i = j$ and for each  $t \in [0, i]$, $a_t = b_t$.
	
	(c) For \( x = (a_0 1_0)(a_1 1_1) \cdots (a_i 1_i) \in \Gamma_k \), define \( \iota(x) = a_0 \) and \( l(x) = i \). $\iota(x)$ is the first natural integer that appears in $x$ and $l(x)$ is length of $x$.
	
	(d) For \( x, y \in \Gamma_k \), $ x $ and $y$ are irreducible if and only if  $l(x)\neq l(y)$  or \( \iota(x)\neq\iota(y) \). Otherwise, $x$ and $y$ are called compatible. 
	
	(e) Let $x_1,\ldots,x_m\in \Gamma_k$, we say that $\{x_1,\ldots,x_m\}$ is irreducible set if $x_i$ and $x_j$ are irreducible for every distinct $i,j\in[1,m]$. 
	
	(h) For \( x, y \in \Gamma_k \), \( x \prec y \) if and only if \( l(x) < l(y) \) and if \( l(x)=l(y) \), then \( \iota(x) < \iota(y) \). We say that $x\preceq y$ if and only if $x\prec y$ or $x=y$.
\end{definition} 
\begin{lemma}{\label{3.222}}
	$(\Gamma_k,\preceq)$ is totally ordered set.
\end{lemma}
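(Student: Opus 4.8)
The plan is to verify directly that $\preceq$ satisfies the axioms of a linear order on $\Gamma_k$: reflexivity, antisymmetry, transitivity, and comparability of any two elements. The key observation that makes all four routine is that the strict part $\prec$ is nothing but a lexicographic order on a single pair of data. Writing $\varphi(x)=(l(x),\iota(x))\in\{1,\dots,k\}\times\mathbb{R}$, one has $x\prec y$ precisely when $l(x)<l(y)$, or $l(x)=l(y)$ and $\iota(x)<\iota(y)$; that is, $x\prec y$ iff $\varphi(x)<_{\mathrm{lex}}\varphi(y)$. Since $\{1,\dots,k\}$ and $\mathbb{R}$ are each totally ordered, their lexicographic product $\{1,\dots,k\}\times\mathbb{R}$ is totally ordered, and this is the only structural fact needed.

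Concretely I would argue as follows. \emph{(i) Reflexivity:} $x=x$ gives $x\preceq x$. \emph{(ii) Antisymmetry:} if $x\preceq y$ and $y\preceq x$ with $x\neq y$, then $x\prec y$ and $y\prec x$, so $\varphi(x)<_{\mathrm{lex}}\varphi(y)<_{\mathrm{lex}}\varphi(x)$, which is impossible; hence $x=y$. \emph{(iii) Transitivity:} given $x\preceq y\preceq z$, if either relation is an equality the conclusion is immediate; otherwise $\varphi(x)<_{\mathrm{lex}}\varphi(y)<_{\mathrm{lex}}\varphi(z)$, and transitivity of $<_{\mathrm{lex}}$ yields $\varphi(x)<_{\mathrm{lex}}\varphi(z)$, i.e.\ $x\prec z\preceq z$. (One may spell this out by a short case split on whether $l(x),l(y),l(z)$ coincide, but each case is one line.) \emph{(iv) Comparability:} for $x,y\in\Gamma_k$ compare $\varphi(x),\varphi(y)$ in the total order $<_{\mathrm{lex}}$: if $\varphi(x)<_{\mathrm{lex}}\varphi(y)$ then $x\prec y$, if $\varphi(y)<_{\mathrm{lex}}\varphi(x)$ then $y\prec x$, and if $\varphi(x)=\varphi(y)$ then $l(x)=l(y)$ and $\iota(x)=\iota(y)$.

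The single point that needs care is this last sub-case $\varphi(x)=\varphi(y)$: two such $x,y$ may still differ in their later coefficients (they are \emph{compatible} in the terminology above without being equal), so a fully literal reading of ``totally ordered'' requires either passing to the quotient by compatibility, or — which is all that is actually used in the sequel — restricting to \emph{irreducible} subsets of $\Gamma_k$, on which $\varphi$ is injective and hence $\varphi(x)=\varphi(y)$ forces $x=y$, making $\preceq$ a genuine total order there. In short, the only real obstacle is this bookkeeping about the sense in which $\Gamma_k$ is linearly ordered; once that is pinned down, steps (i)--(iv) above are immediate from the lexicographic description of $\prec$.
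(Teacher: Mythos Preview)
Your direct verification via the lexicographic map $\varphi(x)=(l(x),\iota(x))$ is exactly what the paper has in mind; its own proof reads in full ``The proof is routine.'' Your added caveat is apt and actually goes beyond the paper: compatible-but-unequal elements share the same $\varphi$-value and are hence incomparable under $\preceq$, so the lemma is literally correct only on irreducible subsets of $\Gamma_k$ --- which is precisely the setting in which it is invoked thereafter.
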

\begin{proof}
	The proof is routine.	
\end{proof}
\begin{lemma}{\label{{3.33}}}
	Let $\{x_1,\ldots,x_k\}$ be a finite subset of $\Gamma_k$. Then the following statement hold: \\
	(a) If $\{x_1,\ldots,x_k\}$ is irreducible set, then there exists a unique permutation $\sigma:[1,n]\to[1,n]$ such that 
	\[
	x_{\sigma(1)}\prec x_{\sigma(2)}\prec\cdots\prec x_{\sigma(n)}.
	\]
	(b) Let $	x_{1}\prec x_{2}\prec\cdots\prec x_{n}$. Then $\{x_1,\ldots,x_n\}$ is irreducible set.
\end{lemma}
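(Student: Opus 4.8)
The plan is to deduce both parts from Lemma~\ref{3.222} (transitivity and comparability of $\preceq$) together with an unpacking of the clauses defining $\iota$, $l$, irreducibility, and $\prec$. Write $n$ for the number of elements of the set in question and display it as $\{x_1,\dots,x_n\}$; since an irreducible set automatically consists of pairwise distinct strings, this convention is harmless.

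The single fact that drives both parts is the following: for $x,y\in\Gamma_k$, the pair $x,y$ is irreducible if and only if $x\neq y$ and exactly one of $x\prec y$ and $y\prec x$ holds. Indeed, if $l(x)\neq l(y)$ then the string of smaller length $\prec$-precedes the other, while if $l(x)=l(y)$ but $\iota(x)\neq\iota(y)$ then the string with the smaller value of $\iota$ $\prec$-precedes the other (reading $\prec$ as the length-then-$\iota$ lexicographic comparison); in either case $x\neq y$, since equal strings share the same length and the same leading entry. Conversely, if $x=y$, or if $l(x)=l(y)$ and $\iota(x)=\iota(y)$, then $x$ and $y$ are compatible by definition. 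I will also record that $\prec$ is irreflexive (immediate from its definition) and transitive (by Lemma~\ref{3.222}, or by a short case split on lengths).

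For part (a): by the equivalence just recorded, an irreducible set $\{x_1,\dots,x_n\}$ is a finite set on which $\prec$ restricts to a strict linear order --- irreflexive, transitive, and total. A finite set carrying a strict linear order admits exactly one enumeration $y_1\prec y_2\prec\cdots\prec y_n$ of all of its members; this is the elementary statement that a finite chain is uniquely order-isomorphic to $\{1,\dots,n\}$ with its usual order, and if one wants it self-contained it follows by induction on $n$ by repeatedly extracting the $\prec$-least element. Letting $\sigma(m)\in[1,n]$ be the unique index with $x_{\sigma(m)}=y_m$ (unique because the $x_i$ are pairwise distinct) gives the required permutation, and uniqueness of $\sigma$ is then immediate: any competitor $\tau$ would produce a second $\prec$-increasing listing $x_{\tau(1)}\prec\cdots\prec x_{\tau(n)}$ of the same chain, forcing $x_{\tau(m)}=y_m=x_{\sigma(m)}$ and hence $\tau=\sigma$.

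For part (b): assume $x_1\prec x_2\prec\cdots\prec x_n$ and fix distinct indices, say $i<j$, so that $x_i\prec x_j$ by transitivity of $\prec$. I must check that $l(x_i)\neq l(x_j)$ or $\iota(x_i)\neq\iota(x_j)$. If both failed, then $l(x_i)=l(x_j)$, so the relation $x_i\prec x_j$ would force $\iota(x_i)<\iota(x_j)$ by the definition of $\prec$, contradicting $\iota(x_i)=\iota(x_j)$. Hence every pair $x_i,x_j$ with $i\neq j$ is irreducible, i.e.\ $\{x_1,\dots,x_n\}$ is an irreducible set. The whole argument is bookkeeping, and I anticipate no genuine obstacle; the only point deserving care is the ``if and only if'' recorded above --- in particular, that two compatible but unequal strings (same length, same leading entry, different tails) are $\preceq$-incomparable, which is precisely why part (a) must assume irreducibility whereas part (b) needs nothing beyond the definition of $\prec$.
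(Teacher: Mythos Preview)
Your proof is correct and follows the same route the paper gestures at: the paper's own proof consists solely of the sentence ``By Lemma~\ref{3.222} and Definition $\prec$, it is obvious.'' Your unpacking of this---in particular the key equivalence that a pair $x,y$ is irreducible if and only if it is strictly $\prec$-comparable, and your closing observation that compatible but unequal strings are $\preceq$-incomparable---is exactly what makes the ``obvious'' obvious, and is in fact slightly more careful than the paper, which leans on the (not quite literally true) total-order claim of Lemma~\ref{3.222}.
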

\begin{proof}
	By Lemma \ref{3.222} and Definition $\prec$, it is obvious.
\end{proof}
\begin{definition}{\label{3.22}}
	For every $x = (a_0 1_0)(a_1 1_1) \cdots (a_i 1_i)$ and $ y = (b_0 1_0)\cdots (b_j 1_j)$ in $\Gamma_k$, we define $x+y$ as follow:\\
	(a) If $x$ and $y$ are compatible, i.e., $\iota(x)=\iota(y)$ and $l(x)=l(y)$, define $x+y=(a_01_0)((a_1+b_1) 1_1) \cdots ((a_i+b_i) 1_i)$, and  \\
	(b) if $x$ and $y$ are irreducible we just write $x+y$. 
\end{definition}
In fact, the $"+"$ concatenates the two strings $x$ and $y$, and if the two strings are compatible, it assigns a simple string to them. We are now ready to define $"+"$ for a finite number of $\Gamma_k$ elements.
\begin{definition}{\label{3.55}}
	(a)	For every $n\in\mathbb{N}$, define 
	\[
	\Gamma_k^n=\left\{ x_1 + x_2 + \cdots + x_n : x_{1}\prec x_{2}\prec\cdots\prec x_{n}, \{x_1,\ldots,x_n\}\subseteq\Gamma_k\right\}.
	\]
	(b) Define $V_k =\cup_{i=1}^\infty \Gamma_k^n$, where $\Gamma_k^1=\Gamma_k$. $V_k$ is called \textbf{one variable symbolic polynomials space}. If \( \gamma = x_1 + x_2 + \cdots + x_n \) as in the definition of \( V_k \), then \( x_1, x_2, \ldots, x_n \) are the \textit{terms of \( \gamma \)}. The set of terms of $\gamma$ is denoted by $Term(\gamma)$.\\
	(c) Let \( \gamma = x_1 + x_2 + \cdots + x_n \) and \( \mu = y_1 + y_2 + \cdots + y_m \) be two elements in \( \Gamma^n_k \). We say that \( \gamma = \mu \) if and only if $Term(\gamma)=Term(\mu)$. 
\end{definition}
\begin{definition}{\label{3.366}}
	Given \( \gamma = x_1 + x_2 + \cdots + x_n \) and \( \mu = y_1 + y_2 + \cdots + y_m \) in \( V_k \) written as in the definition of \( V_k \), \( \gamma + \mu \) is defined as follows.
	\begin{enumerate}
		
		\item[(a)] 	Given \( t \in [1,n] \), there is at most one \( s \in [1,m] \) such that \( \iota(x_t) = \iota(y_s) \) and \( l(x_t) = l(y_s) \). 
		
		\item[(b)] For every \( t \in [1,n] \), if there is no such \( s \in \{1, 2, \ldots, m\} \) such that (a) holds. In fact, if  $\{x_1,\ldots,x_n\}\cup\{y_1,\ldots,y_m\}$ is a irreducible subset of $\Gamma_k$.
	\end{enumerate}
	(a) If there is such \( s \), assume that $x_t = (a_0 1_0)(a_1 1_1) \cdots (a_i 1_i)$ and $ y_s = (b_0 1_0)(b_1 1_1) \cdots (b_i 1_i)$ such that \( b_0 = a_0 \). Let $z_t = (a_0 1_0)((a_1 + b_1) 1_1) \cdots ((a_i + b_i) 1_i)$. Having chosen \( z_1, z_2, \ldots, z_d \) for $0\leq d\leq min\{m,n\}$. By definition of $V_k$, $\{z_1,\ldots,z_d\}$ is irreducible subset of $\Gamma_k$. Now, let
	\[
	B = \left\{ y_s :\{x_1\ldots,x_n\}\cup\{y_s\}\text{ is irreducible subset of }\Gamma_k. \right\}.
	\]
	(Possibly \( B = \emptyset \).) 	Let \( q = |B| \) and let \( w_1, w_2, \ldots, w_{d+q} \) enumerate \linebreak \( \{z_1, z_2, \ldots, z_d\} \cup B \) by $\prec$. Then we define
	\[
	\gamma + \mu = \mu + \gamma = w_1 + w_2 + \cdots + w_{d+q}.
	\]
	It is obvious that $\{w_1,w_2,\ldots,w_{d+q}\}$ is irreducible set.
	
	(b) If $\{x_1,\ldots,x_n\}\cup\{y_1,\ldots,y_m\}$ is a irreducible subset of $\Gamma_k$, let $w_1,\ldots,w_{n+m}$ enumerate $\{x_1,\ldots,x_n\}\cup\{y_1,\ldots,y_m\}$ by $\prec$. Then we define 
	\[
	\gamma+\mu=\mu+\gamma=w_1+w_2+\cdots+w_{n+m}.
	\]
\end{definition}

\begin{remark}{\label{3.44}}
	Let $x=x_1+\cdots+x_n$ and $y=y_1+\cdots+y_m$ be two elements of $V_k$. Then there exist finite subsets $I(x,y)$ of $Term(x,y)=Term(x)\cup Term(y)$ and $C(x,y)$ of $Term(x)\times Term(y)$ such that\\
	(a) $C(x,y)=\{(a,b)\in Term(x)\times Term(y):l(a)=l(b),\iota(a)=\iota(b)\}$ and \\
	(b) $I(x,y)=\{u\in Term(x,y): \{u\}\cup\{a+b:(a,b)\in C(x,y)\}\mbox{ is irreducible}\}$ is an irreducible subset of $Term(x)\cup Term(y)$. \\
	Then $E(x,y)=I(x,y)\cup \{a+b:(a,b)\in C(x,y)\}$ is irreducible, so let $z_1,\ldots,z_d$ enumerate $E(x,y)$ by $\prec$. Now define 
	\[
	x+y=z_1+\ldots+z_d.
	\]  
	Obviously, $I(x,y)=I(y,x)$ and $C(x,y)=C(y,x)$, and hence $E(x,y)=E(y,x)$. So $x+y=y+x$ for every $x,y\in V_k$.
\end{remark}
\begin{theorem}{\label{3.66}}
	Let $k\in\mathbb{N}$. Then $(V_k,+)$ is a commutative semigroup.
\end{theorem}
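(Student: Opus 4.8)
The plan is to identify each element of $V_k$ with a simple combinatorial datum that makes the operation $+$ transparent, and then to reduce closure and associativity to the corresponding facts for ordinary vector addition.

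First I would record that, by Definition~\ref{3.55}(c), an element $\gamma\in V_k$ is completely determined by its finite nonempty set of terms $Term(\gamma)\subseteq\Gamma_k$, and, conversely, by Lemma~\ref{{3.33}} together with Lemma~\ref{3.222}, every finite nonempty irreducible subset of $\Gamma_k$ equals $Term(\gamma)$ for a unique $\gamma\in V_k$ (list it in increasing $\prec$-order). So I would work with irreducible subsets of $\Gamma_k$ throughout. To $\gamma\in V_k$ I then attach the partial function $\widehat\gamma$ from $\mathbb{R}\times[1,k]$ to $\bigcup_{i=1}^{k}\mathbb{R}^i$ given by $\widehat\gamma(c,i)=(a_1,\dots,a_i)$ when $(c\,1_0)(a_1 1_1)\cdots(a_i 1_i)\in Term(\gamma)$ and undefined otherwise; irreducibility of $Term(\gamma)$ makes this well defined, its domain $D(\gamma)$ is finite and nonempty, and $\gamma\mapsto\widehat\gamma$ is injective.

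The crux is then a single bookkeeping identity, read off from Definition~\ref{3.366} (equivalently Remark~\ref{3.44}):
\[
\widehat{\gamma+\mu}(c,i)=\widehat\gamma(c,i)\oplus\widehat\mu(c,i),
\]
where on each slot $\oplus$ adds in $\mathbb{R}^i$ if both values are defined, returns the defined value if exactly one is, and is undefined if neither is. To prove it I would note that a compatible pair $(a,b)$ with $a\in Term(\gamma)$, $b\in Term(\mu)$ corresponds precisely to a type $(c,i)\in D(\gamma)\cap D(\mu)$ (each such $a$, resp. $b$, lying in at most one pair, by irreducibility), that $a+b$ then has type $(c,i)$ and data $\widehat\gamma(c,i)+\widehat\mu(c,i)$ by Definition~\ref{3.22}(a), and that the leftover terms $I(\gamma,\mu)$ contribute exactly one term for each type in the symmetric difference $D(\gamma)\triangle D(\mu)$. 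Along the way I would check the small lemma that $E(\gamma,\mu)=I(\gamma,\mu)\cup\{a+b:(a,b)\in C(\gamma,\mu)\}$ is irreducible --- two compatible terms in it would already force a compatible pair across $Term(\gamma)$ and $Term(\mu)$, contradicting membership in $I$ --- so that $E(\gamma,\mu)$ does determine an element of $V_k$. In particular $D(\gamma+\mu)=D(\gamma)\cup D(\mu)$ is finite and nonempty, which gives closure; and commutativity is immediate, since $\oplus$ is visibly symmetric and $a+b=b+a$ for compatible $a,b$, exactly as already observed in Remark~\ref{3.44}.

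Associativity then becomes slot-wise associativity of vector addition. Fixing a type $(c,i)$ and writing $u=\widehat\gamma(c,i)$, $v=\widehat\mu(c,i)$, $w=\widehat\nu(c,i)$ --- each an element of $\mathbb{R}^i$ or ``undefined'' --- I would adjoin a formal identity $e$ to the group $(\mathbb{R}^i,+)$ to form the commutative monoid $M_i=\mathbb{R}^i\cup\{e\}$ (with $e+x=x+e=x$ and $e+e=e$); on this slot $\oplus$ is exactly the operation of $M_i$, with $e$ playing the role of ``undefined''. Applying the crux identity twice gives
\[
\widehat{(\gamma+\mu)+\nu}(c,i)=(u+v)+w=u+(v+w)=\widehat{\gamma+(\mu+\nu)}(c,i)
\]
in $M_i$, the middle equality being associativity in $M_i$. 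Since this holds for every type and $\gamma\mapsto\widehat\gamma$ is injective, $(\gamma+\mu)+\nu=\gamma+(\mu+\nu)$, and combined with closure and commutativity this proves that $(V_k,+)$ is a commutative semigroup. I expect the bulk of the genuine work --- and the only delicate point --- to be the crux identity, i.e. matching the somewhat intricate case split in Definition~\ref{3.366} with the clean slot-wise description and verifying irreducibility of $E(\gamma,\mu)$; once that is in place, associativity needs no case analysis over ``which terms are compatible across the three summands,'' being handled uniformly by the monoids $M_i$.
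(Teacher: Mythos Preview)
Your argument is correct and follows a genuinely different route from the paper's. The paper proves associativity by induction on $|Term(\nu)|$: in the base case $\nu\in\Gamma_k$ it writes $Term(\gamma+\mu)=A_\gamma\cup A_\mu\cup\{a+b:(a,b)\in C(\gamma,\mu)\}$ and runs through four cases according to whether the single term $\nu$ is compatible with an element of $A_\gamma$, of $A_\mu$, of the set of merged sums, or with none of them; the inductive step then peels off one term of $\nu$ at a time. You instead encode each $\gamma$ by the partial function $\widehat\gamma$ indexed by its ``types'' $(\iota(x),l(x))$, establish once that $\widehat{\gamma+\mu}=\widehat\gamma\oplus\widehat\mu$ slot-wise, and then read off closure, commutativity and associativity simultaneously from the commutative monoids $M_i=\mathbb{R}^i\cup\{e\}$. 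The paper's approach stays at the level of the raw definitions and needs no auxiliary encoding, but pays with a case analysis that must be redone for every configuration of compatible terms; your approach concentrates all the bookkeeping into the verification of the crux identity (and of irreducibility of $E(\gamma,\mu)$), after which associativity requires no case split at all and makes transparent that $(V_k,+)$ is essentially the finitely supported part of a direct sum of copies of $\mathbb{R}^i$ with an adjoined neutral element.
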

\begin{proof}
	By Definition \ref{3.366}, $V_k$ is closed under operation $+$, and by Remark \ref{3.44}, $(V_k,+)$ is commutative.
	
	Now we prove that $(x+y)+z=x+(y+z)$ for every $x,y,z\in V_k$, by induction on $|Term(z)|$.\\
	Let $z\in \Gamma_k$, and let $x,y\in V_k$ be two arbitrary elements. For $x,y\in V_k$, 
	\[
	Term(x+y)=A_x\cup A_y\cup\{a+b:(a,b)\in C(x,y)\}.
	\]
	Notice that $A_x=I(x,y)\cap Term(x)$, $A_y=I(x,y)\cap Term(y)$ and $\{a+b:(a,b)\in C(x,y)\}$ are disjoint. Let $Term(x+y)=\{u_1\prec u_2\prec\cdots\prec u_l\}$.\\
	{\bf{ Case 1:}}\\
	If there exists $u_i\in A_x$ such that $z$ and $u_i$ are compatible, then $(A_x\setminus\{u_i\})\cup\{u_i+z\}=A_{x,z}$ is irreducible, and also  $A_{x,z}\cup A_y\cup\{a+b:(a,b)\in C(x,y)\}$  and $Term(y)\cup\{z\}$  are irreducible. Therefore, we have 
	\[
	(x+y)+z=u_1+u_2+\cdots+(u_i+z)+\cdots+u_l=x+(y+z).
	\]
	{\bf{ Case 2:}}\\	
	If there exists $u_i\in A_y$ such that $z$ and $u_i$ are compatible, then $(A_y\setminus\{u_i\})\cup\{u_i+z\}=A_{y,z}$ is irreducible, and also  $A_{x}\cup A_{y,z}\cup\{a+b:(a,b)\in C(x,y)\}$  and $Term(y)\cup\{z\}$  are irreducible. Therefore we have 
	\[
	(x+y)+z=u_1+u_2+\cdots+(u_i+z)+\cdots+u_l=x+(y+z).
	\]
	{\bf{ Case 3:}}\\	
	If there exists $u_i\in \{a+b:(a,b)\in C(x,y)\}$ such that $z$ and $u_i$ are compatible, then there exist $a\in Term(x)$ and $b\in Term(y)$ such that $u_i=a+b$, $(a,z)\in C(x,z)$ and $(b,z)\in C(y,z)$.  Therefore we have 
	\[
	u_1\prec\cdots\prec u_{i-1}\prec u_i+z=a+b+z\prec u_{i+1}\prec \cdots\prec u_l.
	\]
	This implies that $(Term(y)\setminus{b})\cup\{b+z\}$ is irreducible. Therefore we have 
	\begin{align*}
	(x+y)+z&=u_1+u_2+\cdots u_{i-1}+((a+b)+z)+u_{i}\cdots+u_l\\
	&=u_1+u_2+\cdots u_{i-1}+(a+(b+z))+u_{i}\cdots+u_l\\
	&=x+(y+z).
	\end{align*}	
	{\bf{ Case 4:}}\\	
	If $z$ is not compatible with any member of $Term(x+y)$, then $Term(x+y)\cup\{z\}$ is irreducible. Therefore we will have
	\[
	u_1\prec u_2\prec\prec u_{i-1}\prec z\prec u_i\prec\cdots\prec u_l.
	\]
	Since $Term(y)\cup\{z\}$ is irreducible, implies that $(x+y)+z=x+(y+z)$.
	
	Now, assume that $|Term(z)|=n > 1$ and the statement is true for smaller sets, (induction hypothesis). Now let $z=z_1+\cdots+z_n$, let $x,y\in V_k$ be two arbitrary elements of $V_k$. Then, by the induction hypothesis,  we have	
	\begin{align*}
	(x+y)+z=&(x+y)+(z_1+z_2+\cdots+z_n)\\
	=&(x+y)+\left(z_1+(z_2+\cdots+z_n)\right)\\
	=&\left((x+y)+z_1\right)+(z_2+\cdots+z_n)\\
	=&\left(x+(y+z_1)\right)+(z_2+\cdots+z_n)\\
	=&x+\left((y+z_1)+(z_2+\cdots+z_n)\right)\\
	=&x+\left(y+(z_1+(z_2+\cdots+z_n))\right)\\
	=&x+\left(y+(z_1+z_2+\cdots+z_n)\right)\\
	&=x+(y+z).
	\end{align*}	
	Therefore, $"+"$ is an associative operation on $V_k$. 
	
\end{proof}

\begin{definition}
	For an element $\eta\in V_k$, we define 
	\[
	Ir(\{\eta\})=\{y\in V_k:Term(y)\cup Term(\eta)\mbox{ is an irreducible subset of }\Gamma_k\}.
	\] 
\end{definition}
\begin{lemma}{\label{366}}
	(a) Let $\eta\in V_k$, $Ir(\{\eta\})$ is a subsemigroup of $V_k$.\\ 
	(b) $\{Ir(\{\eta\}):\eta\in V_k\}$ has the finite intersection property.\\
	(c) For a finite subset $\{\eta_1,\ldots,\eta_m\}$ of $V_k$, $Ir(\{\eta_1,\ldots,\eta_m\})=\bigcap_{i=1}^mIr(\{\eta_i\})$ is a subsemigroup of $V_k$.
\end{lemma}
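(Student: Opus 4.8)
The plan is to reduce the whole lemma to bookkeeping about \emph{types} of terms. For a string $u=(a_01_0)(a_11_1)\cdots(a_i1_i)\in\Gamma_k$ write $\tau(u)=(l(u),\iota(u))=(i,a_0)$. By the definitions of $l$, $\iota$ and of compatibility, two elements of $\Gamma_k$ are compatible exactly when they have the same $\tau$, so a finite $A\subseteq\Gamma_k$ is irreducible exactly when $\tau$ is one-to-one on $A$. Since the terms of any member of $V_k$ already form an irreducible set (this is built into Definition~\ref{3.55} via the chain $x_1\prec\cdots\prec x_n$), the condition defining $Ir(\{\eta\})$ is that no term of $y$ be compatible with any term of $\eta$; equivalently $Ir(\{\eta\})=\{\,y\in V_k:\tau(Term(y))\cap\tau(Term(\eta))=\emptyset\,\}$. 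So I will work throughout with the finite type-sets $\tau(Term(\cdot))$.

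The single structural input I need is the inclusion $\tau(Term(x+y))\subseteq\tau(Term(x))\cup\tau(Term(y))$ for all $x,y\in V_k$. This is immediate from Definition~\ref{3.366} and Remark~\ref{3.44}: each term of $x+y$ is either a term of $x$, a term of $y$, or a sum $a+b$ of compatible terms $a\in Term(x)$, $b\in Term(y)$, and in that last case Definition~\ref{3.22}(a) keeps the leading coefficient and the length, so $\tau(a+b)=\tau(a)=\tau(b)$. Hence addition in $V_k$ never creates a term of a type absent from the summands.

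Granting this, (a) is quick. First $Ir(\{\eta\})\neq\emptyset$: $Term(\eta)$ is finite, so $\tau(Term(\eta))$ is a finite set, whereas $\Gamma_k$ carries infinitely many types (for each $i\in[1,k]$ the leading coefficient ranges over all of $\mathbb{R}$); any single string whose type avoids $\tau(Term(\eta))$ lies in $Ir(\{\eta\})$. Second, if $x,y\in Ir(\{\eta\})$ then by the structural inclusion $\tau(Term(x+y))$ is disjoint from $\tau(Term(\eta))$, so $x+y\in Ir(\{\eta\})$; combined with the fact that $(V_k,+)$ is a semigroup (Theorem~\ref{3.66}), $Ir(\{\eta\})$ is a subsemigroup. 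For (b): given $\eta_1,\dots,\eta_m\in V_k$, the ``forbidden'' type-set $\bigcup_{j=1}^m\tau(Term(\eta_j))$ is still finite, so a string of an unused type lies in $\bigcap_{j=1}^m Ir(\{\eta_j\})$, giving the finite intersection property. For (c): $Ir(\{\eta_1,\dots,\eta_m\})=\bigcap_{j=1}^m Ir(\{\eta_j\})$ is a nonempty (by (b)) intersection of subsemigroups (by (a)), hence a subsemigroup of $V_k$.

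I do not expect a real obstacle. All the content sits in the type reformulation and in the structural inclusion of the second paragraph, and that inclusion is just the remark that merging two compatible $\Gamma_k$-terms leaves their common length and leading coefficient unchanged. The two points to be careful about are: (i) fixing the precise meaning of ``$Term(y)\cup Term(\eta)$ is an irreducible subset'' when the two term-sets can overlap (the clean reading, used above, is that no term of one is compatible with a term of the other); and (ii) recording that $\Gamma_k$ genuinely has infinitely many available types, which is exactly what makes $Ir(\{\eta\})$ and its finite intersections nonempty.
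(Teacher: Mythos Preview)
Your proof is correct and follows essentially the same route as the paper's. The paper argues nonemptiness in (a) and (b) by avoiding the finitely many $\iota$-values occurring in $Term(\eta)$ (rather than full $(l,\iota)$-types), and proves closure under $+$ by exactly the case split on $I(x,y)$ versus $\{a+b:(a,b)\in C(x,y)\}$ that you have packaged into the inclusion $\tau(Term(x+y))\subseteq\tau(Term(x))\cup\tau(Term(y))$; part (c) is declared obvious there as well.
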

\begin{proof}
	(a) Let $L=\{\iota(v):v\in Term(\eta)\}$ then $\{x\in\Gamma_k:\iota(x)\notin L\}$ is a subset of $Ir(\{\eta\})$. Therefore $Ir(\{\eta\})$ is non-empty set. 
	
	Now, let $x,y\in Ir(\{\eta\})$. Then $Term(x+y)=\{u_1\prec u_2\prec\cdots\prec u_l\}$. Now, let $A=I(x,y)\cup \{a+b:(a,b)\in C(x,y)\}$ and let $A\cup Term(\eta)$ be not irreducible. So there exist $u\in A$ and $v\in Term(\eta)$ such that $u$ and $v$ are compatible. If $u\in I(x,y)$, we have a contradiction. So let $u=a+b$ for some $(a,b)\in C(x,y)$. Since $\iota(u)=\iota(a)=\iota(b)$ and $l(u)=l(a)=l(b)$, so $v$ and $a\in Term(x)$ are compatible. Therefore, we have a contradiction. Therefore $A\cup Term(\eta)$ is irreducible, and so $x+y\in Ir(\{\eta\})$. This completes our proof.\\
	\item[(b)] For every finite set $F \subseteq V_k$, let
	$A = \left\{ \iota(x) \mid x \in \cup_{\eta \in F} \mathrm{Term}(\eta) \right\}.$ Now define 
	\[
	B=\{x\in\Gamma_k:\iota(x)\notin A\}.
	\] 
	It is obvious that $B\subseteq \bigcap_{\eta\in F}Ir(\{\eta\})$, and so $\{Ir(\{\eta\}):\eta\in V_k\}$ has the finite intersection property. \\
	(c) It is obvious.
\end{proof}
\begin{definition}{\label{3.6}}
	For $ r = ( r_1, \ldots, r_k)\in \mathbb{R}^{k} $, $(a_0,a_1,\ldots,a_k) \in \mathbb{R}^{k+1}$ and $a=(a_01)(a_11_1)(a_21_2)\cdots(a_i1_i)$, we define 
	\[
	r\bullet a=(a_01_0)(r_1a_11_1)(r_2a_21_2)\cdots(r_ia_i1_i). 
	\]
	Also, for $x,y\in \Gamma$, we define $\quad r \bullet (x+y) =r\bullet x+r\bullet y$.
\end{definition}
\begin{lemma}
	Let $k\in \mathbb{N}$, then the following statements hold:\\
	(a) For every $r\in \mathbb{R}^k$ and $\eta\in \Gamma$, $r\bullet \eta$ is well defined. Also for every $\eta_1,\eta_2\in V_k$ and $r\in \mathbb{R}^k$, we have $r\bullet(\eta_1+\eta_2)=r\bullet\eta_1+r\bullet\eta_2$.\\
	(b) For every  $a,b\in \mathbb{R}^k$ and every $\eta\in V_k$, we have $(a+b)\bullet \eta=a\bullet\eta+b\bullet\eta$.
\end{lemma}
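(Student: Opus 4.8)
The plan is to reduce both claims to two elementary facts about $\bullet$ on single strings and then to unwind the definition of $+$ on $V_k$. First I would record that $\bullet$ does not disturb the data that controls $+$: for $a=(a_01_0)(a_11_1)\cdots(a_i1_i)\in\Gamma_k$ and $r=(r_1,\dots,r_k)\in\mathbb{R}^k$, the string $r\bullet a=(a_01_0)(r_1a_11_1)\cdots(r_ia_i1_i)$ is again a member of $\Gamma_k$ (each $r_ta_t\in\mathbb{R}$ and $l(r\bullet a)=i\in[1,k]$), so $r\bullet a$ is well defined, and, crucially, $\iota(r\bullet a)=a_0=\iota(a)$ and $l(r\bullet a)=i=l(a)$, because $\bullet$ leaves the $0$-th coordinate untouched and only rescales the others. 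Hence $r\bullet$ preserves compatibility, irreducibility, and the order $\prec$ on $\Gamma_k$: $x,y$ are compatible (resp.\ $x\prec y$, resp.\ a finite set is irreducible) if and only if the same holds after applying $r\bullet$. In particular, for $\eta=x_1+\cdots+x_n\in V_k$ the set $\{r\bullet x_1,\dots,r\bullet x_n\}$ is again irreducible and $\prec$-ordered, so $r\bullet\eta:=r\bullet x_1+\cdots+r\bullet x_n$ is an unambiguously defined element of $V_k$.

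Second, I would verify the two distributivity identities for a single string. If $x,y\in\Gamma_k$ are compatible, say $x=(a_01_0)(a_11_1)\cdots(a_i1_i)$ and $y=(a_01_0)(b_11_1)\cdots(b_i1_i)$, then $x+y=(a_01_0)((a_1+b_1)1_1)\cdots((a_i+b_i)1_i)$ by Definition \ref{3.22}, and applying $\bullet$ coordinatewise gives $r\bullet(x+y)=(a_01_0)((r_1a_1+r_1b_1)1_1)\cdots((r_ia_i+r_ib_i)1_i)=r\bullet x+r\bullet y$, the last equality being the compatible case of Definition \ref{3.22} applied to the (compatible) strings $r\bullet x$ and $r\bullet y$; when $x$ and $y$ are irreducible the identity $r\bullet(x+y)=r\bullet x+r\bullet y$ is Definition \ref{3.6} itself. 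By the symmetric computation, for $\eta=(c_01_0)(c_11_1)\cdots(c_i1_i)\in\Gamma_k$ and $a,b\in\mathbb{R}^k$ the strings $a\bullet\eta$ and $b\bullet\eta$ are compatible and $(a+b)\bullet\eta=a\bullet\eta+b\bullet\eta$.

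Now I would assemble the general statements. For (a), write $\eta_1=x_1+\cdots+x_n$ and $\eta_2=y_1+\cdots+y_m$; by Remark \ref{3.44} the element $\eta_1+\eta_2$ is built by pairing the compatible terms via $C(\eta_1,\eta_2)$, replacing each pair $(u,v)$ by $u+v$, adjoining the irreducible leftovers $I(\eta_1,\eta_2)$, and listing $E(\eta_1,\eta_2)$ in $\prec$-order. Since $r\bullet$ preserves $\iota$, $l$, compatibility, irreducibility and $\prec$, it carries $C(\eta_1,\eta_2)$ bijectively onto $C(r\bullet\eta_1,r\bullet\eta_2)$ and $I(\eta_1,\eta_2)$ onto $I(r\bullet\eta_1,r\bullet\eta_2)$, and $r\bullet(u+v)=r\bullet u+r\bullet v$ for each compatible pair by the previous step; therefore applying $r\bullet$ termwise to the $\prec$-ordered list of $E(\eta_1,\eta_2)$ produces precisely the $\prec$-ordered list of $E(r\bullet\eta_1,r\bullet\eta_2)$, i.e.\ $r\bullet(\eta_1+\eta_2)=r\bullet\eta_1+r\bullet\eta_2$. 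For (b), given $\eta=x_1+\cdots+x_n\in V_k$ and $a,b\in\mathbb{R}^k$: since $\{x_1,\dots,x_n\}$ is irreducible, each $a\bullet x_j$ is compatible with $b\bullet x_j$ and irreducible with $b\bullet x_l$ for $l\neq j$, so in forming $a\bullet\eta+b\bullet\eta$ one gets $C(a\bullet\eta,b\bullet\eta)=\{(a\bullet x_j,b\bullet x_j):1\le j\le n\}$, $I(a\bullet\eta,b\bullet\eta)=\emptyset$, and each paired sum equals $(a+b)\bullet x_j$ by the single-string identity; as $\prec$ is preserved, the $\prec$-enumeration of these sums is $(a+b)\bullet x_1+\cdots+(a+b)\bullet x_n=(a+b)\bullet\eta$.

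The only place needing genuine care is the third paragraph: one must check that $r\bullet$ really does commute with the bookkeeping of Definition \ref{3.366} and Remark \ref{3.44}, namely that the split of the term set into compatible pairs plus irreducible leftovers, and the subsequent $\prec$-enumeration, are unaffected by $r\bullet$. Once preservation of $\iota$, $l$ and $\prec$ is in hand this is bookkeeping rather than a real obstacle; alternatively one can bypass it by an induction on $|Term(\eta_2)|$ (resp.\ $|Term(\eta)|$) using associativity of $+$ on $V_k$ from Theorem \ref{3.66}, exactly as in the proof of that theorem, which reduces everything to the two single-string identities established above.
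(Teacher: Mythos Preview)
Your argument is correct and is exactly the kind of verification the paper has in mind: the authors' proof reads in full ``The proof is routine.'' Your key observation that $r\bullet$ fixes $\iota$ and $l$ (hence preserves compatibility, irreducibility and $\prec$) is precisely what makes the bookkeeping go through, and both the direct unwinding via Remark~\ref{3.44} and the alternative induction on $|\mathrm{Term}(\eta_2)|$ that you mention are valid ways to finish.
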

\begin{proof}
	The proof is routine.
\end{proof}

\begin{remark}
	Let $p(x)=\sum_{i=1}^ka_ix^i$ be a polynomial in $\mathbb{R}[x]$.  Define $P_x:V_k\to \mathbb{R}[x]$ by $P_x(a1_0)=a$ and $P_x(a1_i)=ax$ for every $i=1,2,\ldots,k$ and for every $a\in \mathbb{R}$. It is obvious that for every $u,v\in V_k$, if $u\in Ir(\{v\})$, then we have $P_x(u+v)=P_x(u)+P_x(v)$. Therefore
	\[
	P_x(\sum_{i=1}^k(a_i1_0)(1_1)(1_2)\cdots(1_i))=\sum_{i=1}^ka_ix^i.
	\]
	Therefore for every $p(x)\in\mathbb{R}[x]$, there exists $\eta\in \V_k$ such that $P_x(\eta)=p(x)$.
\end{remark}

\section{Symbolic Polynomials Space Near Zero}
\begin{definition}
	Let $k\in\mathbb{N}$ and let $\mathbb{L}$ be a subring of $\mathbb{R}$. We define function $\pi:V_k\to \mathbb{R}$ with the following properties:
	\begin{enumerate}[label=(\alph*)]
		\item 
		\(\pi(1_i) = 1 \quad \forall i=0,\ldots,k\).
		\item 
		\(\pi(a1_i) = a \quad \forall a\in \mathbb{L}, \quad \forall i=0,1,\ldots,k\).
		\item 
		\(\pi\left(\prod_{j=0}^i (a_j 1_j)\right) = \prod_{j=0}^i \pi(a_j 1_j) = \prod_{j=0}^i a_j \quad \forall \prod_{j=0}^i (a_j 1_j)\in V_k\).
		\item 
		$\pi(a+b) = \pi(a) + \pi(b)$ if $a\in \mathrm{Ir}(\{b\})$.
	\end{enumerate}
	Therefore if $a_0=b_0$, we will have 
	\[
	\pi\left(\prod_{j=0}^i (a_j 1_j) + \prod_{j=0}^i (b_j 1_j)\right) = \pi\left((a_01_0)\prod_{j=1}^i ((a_j+b_j) 1_j)\right) = a_0 \prod_{j=1}^i (a_j+b_j).
	\]
\end{definition}

\begin{definition}
	Let $k\in\mathbb{N}$ and let $(S,\dot{+})$ be partially near zero semigroup. The collection of all symbolic polynomials near zero is denoted by $V_k(0,S)$, and is defined by 
	\[
	V_k(0,S)=\{\eta\in V_k:\pi(\eta)\in S\}.
	\]
	We define $\pi_S(x)=\pi(x)$ for every $x\in V_k(0,S)$.
\end{definition}

\begin{theorem}
	Let $(S,\dot{+})$ be a partially near zero semigroup. For every $k\in\mathbb{N}$, $(V_k(0,S),\dotplus)$ is a commutative adequate partial semigroup, where $\eta_1 \dotplus \eta_2 = \eta_1 + \eta_2$ if $\pi(\eta_1 + \eta_2)\in S$.
\end{theorem}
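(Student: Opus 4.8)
The whole structure is inherited from the commutative semigroup $(V_k,+)$ of Theorem~\ref{3.66}: by definition $\eta_1\dotplus\eta_2$ \emph{is} the element $\eta_1+\eta_2$ of $V_k$, and it is declared defined precisely when that element lies back in $V_k(0,S)$, i.e. when $\pi(\eta_1+\eta_2)\in S$. So the plan is to check, in turn, well-definedness, commutativity, the partial-associativity axiom, and adequacy, in each case reducing to the corresponding statement for $(V_k,+)$ together with two facts about $\pi$: that $\pi(\eta+\mu)=\pi(\eta)+\pi(\mu)$ whenever $\eta\in\mathrm{Ir}(\{\mu\})$ (property (d)), and that on a pair of compatible terms $\prod_{j=0}^{i}(a_j1_j)$, $\prod_{j=0}^{i}(b_j1_j)$ the value of $\pi$ is the explicit coefficientwise expression $a_0\prod_{j=1}^{i}(a_j+b_j)$ recorded in the definition of $\pi$.

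Well-definedness is immediate ($\eta_1+\eta_2\in V_k$ always, and the defining condition forces it into $V_k(0,S)$). For commutativity, Remark~\ref{3.44} gives $\eta_1+\eta_2=\eta_2+\eta_1$, hence $\pi(\eta_1+\eta_2)=\pi(\eta_2+\eta_1)$, so the two partial products are simultaneously defined and equal. For the ``both sides agree'' half of associativity, Theorem~\ref{3.66} gives $(\eta_1+\eta_2)+\eta_3=\eta_1+(\eta_2+\eta_3)$ in $V_k$, so $\pi$ agrees on them, and whenever both $(\eta_1\dotplus\eta_2)\dotplus\eta_3$ and $\eta_1\dotplus(\eta_2\dotplus\eta_3)$ are defined they give the same element of $V_k(0,S)$.

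The substantive half of associativity is that definedness transfers: if $(\eta_1\dotplus\eta_2)\dotplus\eta_3$ is defined, so that $\pi(\eta_1+\eta_2)\in S$ and $\pi((\eta_1+\eta_2)+\eta_3)\in S$, one must deduce that $\eta_1\dotplus(\eta_2\dotplus\eta_3)$ is defined. Half of this is free, since $\pi(\eta_1+(\eta_2+\eta_3))=\pi((\eta_1+\eta_2)+\eta_3)\in S$; the content is to show $\pi(\eta_2+\eta_3)\in S$. The plan is to use Remark~\ref{3.44} to write $\mathrm{Term}(\eta_2+\eta_3)$ as the irreducible part $I(\eta_2,\eta_3)$ together with the merged terms $\{a+b:(a,b)\in C(\eta_2,\eta_3)\}$, to evaluate $\pi$ on each merged term by the coefficientwise formula, and to compare term by term with the analogous decomposition of $\pi((\eta_1+\eta_2)+\eta_3)$: the extra terms and coefficients coming from $\eta_1$ only enlarge the factors $a_j+b_j$ and introduce further nonnegative summands, so that $0<\pi(\eta_2+\eta_3)\le\pi((\eta_1+\eta_2)+\eta_3)<1$; and since $\pi(\eta_2+\eta_3)$ is then a sum of products of coefficients of the dense subring out of which $S$ is carved, it lies in that subring, hence in $S$. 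This is the step I expect to be the main obstacle, precisely because $\pi$ fails to be additive on compatible terms, so the comparison inequality has to be squeezed out of the explicit coefficientwise expansion, and in general it uses that the coefficients occurring in $\eta_1,\eta_2,\eta_3$ are nonnegative; everything else is bookkeeping.

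Finally, adequacy. Given $F=\{\eta_1,\dots,\eta_m\}\in P_f(V_k(0,S))$, put $H=\{\pi(\eta_1),\dots,\pi(\eta_m)\}\in P_f(S)$ and, using adequacy of the partially near zero semigroup $(S,\dot{+})$, pick $\varepsilon\in R_S(H)$, so that $\pi(\eta_j)+\varepsilon\in S$ for every $j$. Let $L=\{\iota(v):v\in\mathrm{Term}(\eta_j),\ 1\le j\le m\}$, a finite subset of $\mathbb{R}$, choose $c\in\mathbb{R}\setminus(L\cup\{0\})$, and set $\mu=(c\,1_0)\bigl((\varepsilon/c)\,1_1\bigr)\in\Gamma_k$. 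Then $\pi(\mu)=\varepsilon\in S$, so $\mu\in V_k(0,S)$; the only leading coefficient occurring in $\mathrm{Term}(\mu)$ is $c\notin L$, so $\mathrm{Term}(\mu)\cup\mathrm{Term}(\eta_j)$ is irreducible and $\mu\in\mathrm{Ir}(\{\eta_j\})$ for each $j$ (cf. Lemma~\ref{366}); hence $\pi(\eta_j+\mu)=\pi(\eta_j)+\pi(\mu)=\pi(\eta_j)+\varepsilon\in S$ by property (d), i.e. $\mu\in R_{V_k(0,S)}(\eta_j)$. Therefore $\mu\in\bigcap_{j=1}^{m}R_{V_k(0,S)}(\eta_j)=R_{V_k(0,S)}(F)\ne\emptyset$, and $(V_k(0,S),\dotplus)$ is an adequate partial semigroup.
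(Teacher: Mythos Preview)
Your adequacy argument is essentially the paper's: the paper writes the containment
\[
R(\eta)\supseteq\pi_S^{-1}\bigl((0,1-\pi(\eta))\bigr)\cap\mathrm{Ir}(\{\eta\})
\]
and asserts the right side (and its finite intersections) is nonempty, while you produce an explicit witness $\mu=(c\,1_0)((\varepsilon/c)\,1_1)$ lying in $\mathrm{Ir}(\{\eta_1,\dots,\eta_m\})$ with $\pi(\mu)=\varepsilon\in R_S(H)$. Both arguments hinge on the same idea: restrict to the irreducible locus where $\pi$ is additive and then use adequacy of $(S,\dot+)$.

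The associativity discussion, however, has a genuine gap. Your proposed inequality $0<\pi(\eta_2+\eta_3)\le\pi((\eta_1+\eta_2)+\eta_3)$ rests on all coefficients being nonnegative, but elements of $\Gamma_k$ allow arbitrary real coefficients. And in fact the strong associativity axiom (``one side defined $\Rightarrow$ the other'') fails for $(V_k(0,S),\dotplus)$ as defined. Take $k=2$, $S=\mathbb{Q}\cap(0,1)$, and for $j=1,2,3$ let $\eta_j=(1\cdot 1_0)(a_j1_1)(b_j1_2)$ with
\[
(a_1,b_1)=(1,\tfrac{1}{100}),\qquad (a_2,b_2)=(10,\tfrac{1}{100}),\qquad (a_3,b_3)=(-5,-\tfrac{1}{100}).
\]
Then $\pi(\eta_1)=\tfrac{1}{100}$, $\pi(\eta_2)=\tfrac{1}{10}$, $\pi(\eta_3)=\tfrac{1}{20}$, $\pi(\eta_1+\eta_2)=11\cdot\tfrac{2}{100}=\tfrac{22}{100}$, and $\pi(\eta_1+\eta_2+\eta_3)=6\cdot\tfrac{1}{100}=\tfrac{6}{100}$, all in $S$; so $(\eta_1\dotplus\eta_2)\dotplus\eta_3$ is defined. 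But $\pi(\eta_2+\eta_3)=5\cdot 0=0\notin S$, so $\eta_2\dotplus\eta_3$ is undefined. The paper does not confront this: its proof declares the partial semigroup axioms ``obvious'' and passes directly to adequacy. So the obstacle you correctly anticipated is not just a gap in your argument but a gap in the theorem as stated, under the paper's own definition of partial semigroup; the rest of the paper effectively only needs the weak form (equality when both sides exist), which you did verify.
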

\begin{proof}
	It is obvious that $(V_k(0,S),\dotplus)$ is a commutative adequate partial semigroup because for every $\eta\in V_k(0,S)$, we have 
	\begin{align*}
	R(\eta) &= \{\zeta\in V_k(0,S) : \eta \dotplus \zeta \text{ is well defined} \} \\
	&= \{\zeta\in V_k(0,S) : \pi(\eta + \zeta)\in S \} \\
	&\supseteq \{\zeta\in \mathrm{Ir}(\{\eta\}) : \pi(\eta) + \pi(\zeta)\in S \} \\
	&= \{\zeta\in \mathrm{Ir}(\{\eta\}) : \pi(\zeta)\in (0,1-\pi(\eta)) \cap S \} \\
	&= \pi_S^{-1}((0,1-\pi(\eta))) \cap \mathrm{Ir}(\{\eta\}).
	\end{align*}
	It is obvious that $\pi^{-1}((0,1-\pi(\eta)) \cap S) \cap \mathrm{Ir}(\{\eta\}) \neq \emptyset$ and for every $\eta_1,\eta_2\in V_k(0,S)$, we have 
	\[
	R({\eta_1}) \cap R({\eta_2}) \supseteq \pi_S^{-1}((0,1-\min\{\pi(\eta_1),\pi(\eta_2)\})) \cap \mathrm{Ir}(\{\eta_1,\eta_2\}) \neq \emptyset.
	\]
\end{proof}

Now, consider the product space $\times_{i=1}^m V_k(0,S)$ equipped with the pointwise operation $\dotplus$. It is clear that $\left(\times_{i=1}^m V_k(0,S), \dotplus\right)$ is a commutative adequate partial semigroup.

\begin{definition}
	Let $(S,\dot{+})$ be a partially near zero semigroup. Let $\{\eta_1,\ldots,\eta_m\}$ be an arbitrary non-empty finite subset of $V_k$.\\
	(a) Define $\mathrm{Ir} = \mathrm{Ir}(\{\eta_i\}_{i=1}^m) = \bigcap_{i=1}^m \mathrm{Ir}(\{\eta_i\}) \cap V_k(0,S)$.\\
	(b) Define 
	\[
	V = V(\{\eta_i\}_{i=1}^m) = \left\{ \left(x \dotplus r \bullet \eta_1, \ldots, x \dotplus r \bullet \eta_m \right) : x \in \mathrm{Ir}, r \in \Delta_S^k \right\} \cup \Delta_{\mathrm{Ir}}^m.
	\]
	(c) Define 
	\[
	I = I(\{\eta_i\}_{i=1}^m) = \left\{ \left(x \dotplus r \bullet \eta_1, \ldots, x \dotplus r \bullet \eta_m \right) : x \in \mathrm{Ir}, r \in \Delta_S^k \right\}.
	\]
	(d) For every $i=1,\ldots,m$, let
	\[
	T_i = \{x \dotplus r \bullet \eta_i : r \in \Delta_S^k, x \in \mathrm{Ir}\} \cup \mathrm{Ir}, 
	\]
	and define $T = T(\{\eta_i\}_{i=1}^m) = \bigcup_{i=1}^m T_i$.
\end{definition}

Let $Y = \times_{i=1}^m \delta V_k(0,S)$. Since for every $(x_1,\ldots,x_m) \in V$, we have 
\[
R_V((x_1,\ldots,x_m)) = \times_{i=1}^m R_T(x_i),
\] 
we define:
\[
\delta V = \bigcap_{x \in V} \operatorname{cl}_Y R_V(x)
\quad \text{and} \quad
\delta I = \bigcap_{x \in I} \operatorname{cl}_Y R_I(x).
\]

\begin{lemma} \label{asli}
	Let $(S,\dot{+})$ be a partially near zero semigroup. Let $\{\eta_1,\ldots,\eta_m\}$ be an arbitrary non-empty finite subset of $V_k$. The following statements hold:\\
	(a) Let $T = T(\{\eta_i\}_{i=1}^m)$. Then $(T,\dotplus)$ is a commutative adequate  partial subsemigroup of $(V_k(0,S),\dotplus)$.\\
	(b) Let $V = V(\{\eta_i\}_{i=1}^m)$. Then $(V,\dotplus)$ is a commutative adequate  partial subsemigroup of $\times_{i=1}^m T$.\\
	(c) Let $I = (I(\{\eta_i\}_{i=1}^m),\dotplus)$. Then $(I,\dotplus)$ is an adequate partial subsemigroup of $(V,\dotplus)$. Also, $I$ is an ideal of $V$ and $K(\delta V) = K(\delta I)$.\\
	(d) $\mathrm{Ir}$ is an adequate partial subsemigroup of $T$, $\delta (\mathrm{Ir}) = \delta T$.\\
	(e) Let $Y = \times_{i=1}^m \delta S_i$. Then $K(\delta V) = K(Y) \cap \delta V$ and $K(\delta V)\subseteq \delta T$.
\end{lemma}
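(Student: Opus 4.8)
The plan is to imitate the proof of Theorem~\ref{recurrent}, with the recurrence maps replaced by the scalings $r\mapsto r\bullet\eta_i$, and to transport every computation to the already‑understood partial semigroup $(S,\dot+)$ via $\pi$: for $x$ irreducible with $y$ one has $\pi(x\dotplus y)=\pi(x)+\pi(y)$, so (modulo this translation) every set $R_{(\cdot)}(x)$ occurring below has the shape $\{z:\pi(z)<1-\pi(x)\}$, exactly as $R_S(x)=S\cap(0,1-x)$. Two elementary combinatorial facts about the strata $T_i=\{x\dotplus r\bullet\eta_i:x\in\mathrm{Ir},\ r\in\Delta_S^k\}\cup\mathrm{Ir}$ carry most of the load: (i) since $\bullet$ changes neither the leading coefficient nor the length of a term, an element of $T_i\setminus\mathrm{Ir}$ can be $\dotplus$‑added inside $T$ only to elements of $T_i$, so $R_T(t)\subseteq T_i$ for $t\in T_i\setminus\mathrm{Ir}$; and (ii) as the $\eta_i$ are pairwise distinct and $c\mapsto c\bullet\eta$ is injective for $c\ne0$, $T_i\cap T_j=\mathrm{Ir}$ whenever $i\ne j$. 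Throughout, the partial operations on $T,V,I$ are the natural ones, declared undefined on a pair whenever the symbolic sum would leave the set (so sums crossing two different strata are never formed); with that convention closure is automatic.

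Parts (a), (b), (c) are then routine. Commutativity is inherited; adequacy and the Definition~\ref{1.11} condition reduce in each case to density of $S$, the universal witness being a length‑one symbolic polynomial $(s1_0)(1_1)$ with $s\in S$ small and $s$ different from the finitely many leading coefficients appearing in the finite set at hand and in $\eta_1,\dots,\eta_m$ (so that $(s1_0)(1_1)$ is irreducible with everything in sight, $\pi$ behaves additively, and $(s1_0)(1_1)\in\mathrm{Ir}$); for $V$ and $I$ one uses the given identity $R_V((x_1,\dots,x_m))=\times_{i=1}^m R_T(x_i)$ and the diagonal of $\mathrm{Ir}$. For (c) the decisive point is that adding any $v\in V$ to $u=(x\dotplus r\bullet\eta_1,\dots,x\dotplus r\bullet\eta_m)\in I$ yields, in \emph{every} coordinate, an element with the same scaling parameter ($r+r'$, or $r$), hence an element of $I$; so $I$ is an ideal of $V$, and Theorem~\ref{1.23} gives that $\delta I$ is an ideal of $\delta V$ with $K(\delta V)=K(\delta I)$.

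For (d): $\mathrm{Ir}$ is a partial subsemigroup of $T$ by Lemma~\ref{366}, it is adequate, and it meets the Definition~\ref{1.11} condition relative to $T$ by the density argument, so $\delta\mathrm{Ir}\subseteq\delta T$ by Lemma~\ref{1.10}. Conversely, given $q\in\delta T$, choose (for $m\ge2$; the case $m=1$ being vacuous for the theorem) $t_1\in T_1\setminus\mathrm{Ir}$ and $t_2\in T_2\setminus\mathrm{Ir}$; by (i) and (ii), $R_T(t_1)\cap R_T(t_2)\subseteq T_1\cap T_2=\mathrm{Ir}$, and since $R_T(t_1),R_T(t_2)\in q$ we get $\mathrm{Ir}\in q$. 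As $R_{\mathrm{Ir}}(x)=R_T(x)\cap\mathrm{Ir}$ for $x\in\mathrm{Ir}$, it follows that $R_{\mathrm{Ir}}(x)\in q$ for all $x\in\mathrm{Ir}$, i.e. $q\in\delta\mathrm{Ir}$. Thus $\delta\mathrm{Ir}=\delta T$; one also records that $\delta\mathrm{Ir}=\overline{\mathrm{Ir}}\cap\delta V_k(0,S)$.

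For (e): by Theorem~2.23 of \cite{Hindman}, $K(Y)=\times_{i=1}^m K(\delta V_k(0,S))$, and $\delta V=\times_{i=1}^m\bigcap_{t\in T_i}\operatorname{cl}R_T(t)$ is a closed subsemigroup of $Y$ by (b). By Theorem~1.65 of \cite{Hindman} the identity $K(\delta V)=K(Y)\cap\delta V$, and (using (i), (ii) and minimality) the inclusion $K(\delta V)\subseteq\delta T$ in the sense that every coordinate of a minimal element lies in $\delta T$, will follow once we exhibit a single point of $K(Y)\cap\delta V$; the natural one is a diagonal $(p,\dots,p)$ with $p\in K(\delta V_k(0,S))$ and $\mathrm{Ir}\in p$, for then $(p,\dots,p)\in\times_{i=1}^m K(\delta V_k(0,S))=K(Y)$ while $p\in\delta\mathrm{Ir}=\delta T$ forces $(p,\dots,p)\in\times_{i=1}^m\bigcap_{t\in T_i}\operatorname{cl}R_T(t)=\delta V$. \textbf{The crux, and the step I expect to be the main obstacle, is the existence of such a $p$ --- equivalently, that $\mathrm{Ir}$ is a piecewise syndetic subset of $V_k(0,S)$, i.e. $K(\delta V_k(0,S))\cap\overline{\mathrm{Ir}}\ne\emptyset$.} I would attack this by building a $\pi$‑preserving injective partial‑semigroup homomorphism $\psi\colon V_k(0,S)\to\mathrm{Ir}$: replace each term's leading coefficient $a_0$ by some $a_0'\notin L\cup\{0\}$ (via a fixed injection of $\mathbb{R}\setminus\{0\}$ onto a subset of its complement of $L$) and divide the next coefficient by $a_0'/a_0$, which leaves $\pi$, the compatibility relation and the merging rule unchanged; then $\psi$ is an isomorphism onto its image $W\subseteq\mathrm{Ir}$, so $\delta W\cong\delta V_k(0,S)$ and $K(\delta W)\subseteq\overline{W}\subseteq\overline{\mathrm{Ir}}$, and one is reduced to showing that $\delta W$ meets $K(\delta V_k(0,S))$, whereupon Theorem~1.65 of \cite{Hindman} places a minimal idempotent of $\delta V_k(0,S)$ inside $\overline{\mathrm{Ir}}$. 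That last verification --- together with the care needed wherever $\pi$ fails to be additive, namely on compatible pairs of length $\ge2$ --- is where the real work lies; everything else is bookkeeping modeled on Theorem~\ref{recurrent}. With $p$ in hand, (e) follows, and combined with (c) and (d) it yields the algebraic input required by the main theorem.
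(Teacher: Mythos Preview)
Your treatment of (a)--(c) is essentially the paper's: reduce adequacy and the Definition~\ref{1.11} condition to density of $S$ via $\pi$, then invoke Theorem~\ref{1.23} for the ideal statement in (c). For (d) you give a direct argument for the inclusion $\delta T\subseteq\delta\mathrm{Ir}$ via your claims (i) and (ii), whereas the paper simply invokes Theorem~\ref{1.23} after checking that $\mathrm{Ir}$ is an adequate partial subsemigroup of $V_k(0,S)$; your argument is more explicit (and arguably more careful, since Theorem~\ref{1.23} literally requires $\mathrm{Ir}$ to be an ideal of $T$), though your version needs $m\ge2$ and the verification of (i), (ii).

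The substantive divergence is in (e), where you have misidentified the crux. You assume one must produce $p\in K(\delta V_k(0,S))$ with $\mathrm{Ir}\in p$, i.e.\ that $\mathrm{Ir}$ be piecewise syndetic in $V_k(0,S)$, and you sketch an elaborate $\pi$-preserving embedding $\psi$ to get there. The paper avoids this entirely: it picks $p\in K(\delta T)$ (which exists because $\delta T$ is a compact right topological semigroup). By part~(d), $\delta T=\delta\mathrm{Ir}$, so $\mathrm{Ir}\in p$ automatically; and since $p\in\delta T$ one has $R_T(t)\in p$ for every $t\in T$, so for any $x=(x_1,\dots,x_m)\in V$ and any basic neighbourhood $\times_iC_i$ of $(p,\dots,p)$, the set $\bigcap_iC_i\cap\mathrm{Ir}\cap\bigcap_iR_T(x_i)$ lies in $p$ and hence is nonempty, giving a diagonal point in the neighbourhood intersected with $R_V(x)$. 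Thus $(p,\dots,p)\in\delta V$. As for $K(Y)$, the paper (despite the ambiguous ``$\delta S_i$'' in the statement) works with $Y=\times_{i=1}^m\delta T$ in the proof --- compare the parallel Lemma~\ref{asli_2}(e), where $Y=\times_{i=1}^m\delta T_f$ --- so $(p,\dots,p)\in\times_iK(\delta T)=K(Y)$ is immediate, and Theorem~1.65 of \cite{Hindman} finishes. Your $\psi$ construction, and the piecewise-syndeticity question you flag as the main obstacle, are therefore unnecessary: use (d), which you have already proved, to get $\mathrm{Ir}\in p$ for free.
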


\begin{proof}
	(a) It is obvious that $T$ is a non-empty subset of $V_k(0,S)$, and $(T,\dotplus)$ is a commutative partial semigroup. For every $x\in T$, 
	\[
	\pi_S^{-1}\left((0,1-\pi(x))\right) \cap \mathrm{Ir}(\{\eta_i\}_{i=1}^m) \cap T \subseteq R_T(x)
	\]
	if $x = a + r \bullet \eta_i$ for some $a \in \mathrm{Ir}$ and some $i \in \{1,\ldots,m\}$, and if $x \in \mathrm{Ir}$, then $ R_T(x) = \pi_S^{-1}\left((0,1-\pi(x))\right) \cap T$. Therefore $T$ is an adequate partial semigroup.
	
	Now, pick $y \in V_k(0,S)$. Since $\pi_S(y) < 1$, there exists $x \in \pi_S^{-1}((\pi_S(y),1)) \cap T$. Therefore $R_T(x) = \pi_S^{-1}((0,1-\pi(x))) \cap T \subseteq \pi_S^{-1}((0,1-\pi(y)))$. So, by Definition \ref{1.11}, $(T,\dotplus)$ is an adequate partial subsemigroup of $V_k(0,S)$.
	
	(b) It is obvious that $(V,\dotplus)$ is a commutative partial semigroup and by part (a), since 
	\[
	\bigcap_{i=1}^l R_V((u^i_1,\ldots,u^i_m)) = \times_{i=1}^m R_T(\{u^1_i,u_i^2,\ldots,u_i^l\})
	\]
	the proof is complete. Now similar to the proof of part (a), it follows that $V$ is a partial subsemigroup of $\times_{i=1}^m V_k(0,S)$.
	
	(c) The proof follows similarly to part (b) and by Theorem \ref{1.23}.
	
	(d) It is obvious that $(\mathrm{Ir},\dotplus)$ is an adequate partial semigroup. Now, pick $y \in V_k(0,S)$. So there exists $x \in \mathrm{Ir}$ such that $\pi_S(y) < \pi_S(x) < 1$. Therefore 
	\[
	R_{\mathrm{Ir}}(y) = \pi_S^{-1}((0,1 - \pi_S(y))) \cap \mathrm{Ir} \supseteq \pi_S^{-1}((0,1 - \pi_S(x))) \cap \mathrm{Ir} \neq \emptyset.
	\]
	This implies that $\mathrm{Ir}$ is an adequate partial subsemigroup of $V_k(0,S)$, and so by Lemma \ref{1.10}, $\delta \mathrm{Ir} \subseteq \delta V_k(0,S)$. Now, by Theorem \ref{1.23}, it follows that $\delta \mathrm{Ir} = \delta T$.
	
	(e) Let $Y = \times_{i=1}^m \delta S_i$. Let \( p \in K(T)\), so \(\overline{p} = (p, p, \ldots, p) \in K(Y) \). We claim that $\overline{p} \in \delta V$. Let $U$ be a neighborhood of $\overline{p}$ and let $x \in V$, so there exist $C_1, \ldots, C_m \in p$ such that $\times_{t=1}^m C_i \subseteq U$. Now pick $a \in \bigcap_{i=1}^m C_i\cap \mathrm{Ir}$ and so $\overline{a} = (a,\ldots,a) \in U \cap R_V(x)$. This implies that $ \overline{p} \in K(Y) \cap \delta V$ and so by Theorem 1.65 in \cite{Hindman}, it follows that $K(\delta V) = K(Y) \cap \delta V$, and so $K(\delta V) \subseteq \delta I$. 
\end{proof}

\begin{theorem}
	Let $(S,\dot{+})$ be a partially near zero semigroup. Let $\{\eta_1,\ldots,\eta_m\}$ be an arbitrary non-empty finite subset of $V_k$ and let $\mathrm{Ir} = \bigcap_{i=1}^m \mathrm{Ir}(\{\eta_i\})$. Let $p \in K(\delta \mathrm{Ir})$ and $A \in p$, then there exist \( a \in \mathrm{Ir} \) and \( r \in \Delta_S^k \) such that  
	\[
	\{ a + r \bullet \eta_1, \ldots, a + r \bullet \eta_m \} \subseteq A.
	\] 
\end{theorem}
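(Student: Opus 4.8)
The plan is to imitate the proof of Theorem \ref{Jset}, but inside the product partial semigroup $V=V(\{\eta_i\}_{i=1}^m)$ rather than a single copy of $S$: I would transport a minimal idempotent of $\delta V$ across the diagonal determined by $p$, pull the cell $A$ back along it, and then unwind a single sum in the symbolic polynomial semigroup $V_k$.

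First I would locate $p$ in the minimal ideal of the appropriate product semigroup. By Lemma \ref{asli}(d) we have $\delta\mathrm{Ir}=\delta T$, so $p\in K(\delta T)$. Setting $\overline p=(p,\ldots,p)$, the facts that $R_V(x)=\times_{i=1}^m R_T(x_i)$ for $x\in V$, that $K(\times_{i=1}^m\delta T)=\times_{i=1}^m K(\delta T)$ (Theorem 2.23 of \cite{Hindman}), and Lemma \ref{asli}(e) together give $\overline p\in\delta V\cap K(\times_{i=1}^m\delta T)=K(\delta V)$, and $K(\delta V)=K(\delta I)$ by Lemma \ref{asli}(c). By Theorem 2.8 in \cite{Hindman} there is then an idempotent $\eta\in K(\delta I)$ with $\eta\dotplus\overline p=\overline p$; since $\eta\in\delta I$ we also get $I\in\eta$.

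Then I would extract and re-associate as in Theorem \ref{Jset}. From $A\in p$ we obtain $\times_{i=1}^m A\in\overline p=\eta\dotplus\overline p$, hence $\mathcal G:=\{x\in I: x^{-1}(\times_{i=1}^m A)\in\overline p\}\in\eta$; fix $x\in\mathcal G$ and, using $x\in I$, write $x=(a+r\bullet\eta_1,\ldots,a+r\bullet\eta_m)$ with $a\in\mathrm{Ir}$ and $r\in\Delta_S^k$. Since $x^{-1}(\times_{i=1}^m A)=\times_{i=1}^m\big((a+r\bullet\eta_i)^{-1}A\big)$ and $\overline p=(p,\ldots,p)$, we read off $(a+r\bullet\eta_i)^{-1}A\in p$ for each $i$; and since $p\in\delta\mathrm{Ir}$ we also have $R_{\mathrm{Ir}}(a)\in p$, so
\[
B:=R_{\mathrm{Ir}}(a)\cap\bigcap_{i=1}^m (a+r\bullet\eta_i)^{-1}A\ \in\ p,
\]
whence $B\neq\emptyset$; fix $b\in B$. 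As $b\in R_{\mathrm{Ir}}(a)$, the sum $a\dotplus b=a+b$ is defined, and since $(\mathrm{Ir},\dotplus)$ is a partial subsemigroup (Lemma \ref{asli}(d)) it lies in $\mathrm{Ir}$. Because $a$ and $b$ are each irreducible with every $\eta_i$, hence with every $r\bullet\eta_i$, commutativity and associativity of $+$ on $V_k$ (Theorem \ref{3.66}) give
\[
(a+r\bullet\eta_i)\dotplus b=(a+r\bullet\eta_i)+b=(a+b)+r\bullet\eta_i=(a\dotplus b)+r\bullet\eta_i
\]
for every $i$, and the left-hand side lies in $A$ since $b\in(a+r\bullet\eta_i)^{-1}A$. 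Taking $a^{\star}=a\dotplus b\in\mathrm{Ir}$ and $r\in\Delta_S^k$ then yields $\{a^{\star}+r\bullet\eta_i:i=1,\ldots,m\}\subseteq A$.

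The step I expect to require the most care is the first one: verifying that the diagonal ultrafilter assembled from a minimal ultrafilter of the small semigroup $\delta\mathrm{Ir}=\delta T$ really lands in the minimal ideal of the far sparser product semigroup $V$ (equivalently $I$) — this is the content of Lemma \ref{asli}(e) and is where the work of this section is concentrated. After that, the only new twist relative to Theorem \ref{Jset} is the intersection with $R_{\mathrm{Ir}}(a)$ when choosing $b$: it is precisely what guarantees that $a\dotplus b$ is defined and remains inside $\mathrm{Ir}$, so that the re-association carried out via Definition \ref{3.366} and Theorem \ref{3.66} produces an admissible pair $(a^{\star},r)$.
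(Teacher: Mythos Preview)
Your argument is correct, but it does strictly more work than the paper. Once you have established $\overline p\in K(\delta V)=K(\delta I)\subseteq\delta I$ (via Lemma~\ref{asli}(c)--(e)), the paper simply observes that the basic neighbourhood $\times_{i=1}^m\overline A$ of $\overline p$ in $Y$ must meet $R_I(x)\subseteq I$; any point $z=(a+r\bullet\eta_1,\ldots,a+r\bullet\eta_m)$ in $I\cap\times_{i=1}^m A$ already gives the desired $a\in\mathrm{Ir}$ and $r\in\Delta_S^k$, with no idempotent, no auxiliary $b$, and no re-association needed. Your route instead imitates the $J$-set argument of Theorem~\ref{Jset}: you produce an idempotent $\eta$ with $\eta\dotplus\overline p=\overline p$, extract $x\in I$ from $\mathcal G\in\eta$, and then fix things up by adding a further $b\in\mathrm{Ir}$ and re-associating in $V_k$. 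That machinery is sound (your care with $R_{\mathrm{Ir}}(a)$ and with irreducibility of $b$ against each $r\bullet\eta_i$ is exactly what is needed), but it is redundant here. What your approach does buy is a template for stronger statements: if you also intersect with $A$ when choosing $b$, you would get $a^\star\in A$, paralleling the extra conclusion in Theorem~\ref{Jset}; the paper's shortcut does not yield that refinement.
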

\begin{proof}
	For $\{\eta_1,\ldots,\eta_m\}$, define $V = V(\{\eta_i\}_{i=1}^m)$ and $I = I(\{\eta_i\}_{i=1}^m)$. By Lemma \ref{asli}, $I$ is an adequate partial subsemigroup of $V$ and also $I$ is an ideal of $V$, so by Lemma \ref{1.10} and Theorem \ref{1.23}, $K(\delta I) = K(\delta V)$.
	
	Now if $A \in p$, then \( I \cap \times_{t=1}^{m} A \neq \emptyset \). Pick \( z \in I \cap \times_{t=1}^{m} A \), and choose \( a \in \mathrm{Ir} \) and \( r \in \Delta_S^k \) such that  
	\[
	z = \left( a + r \bullet \eta_1, \dots, a + r \bullet \eta_m \right) \in \times_{t=1}^{m} A.
	\] 
\end{proof}

Let $(S,\dot{+})$ be partially near zero semigroup and let $f:\mathbb{N}\to S$ be a sequence such that $\sum_{n\in\mathbb{N}}f(n)<1$. Pick $\eta = \sum_{i=1}^k (c_i1)(1_1)(1_2)\cdots(1_i) \in V_k$ and $F \in P_f(\mathbb{N})$. Then we define $T_F^{\eta}f:V_k(0,S)\to V_k(0,S)$ by
\begin{align*}
T_F^{\eta}f(x) &= x \dotplus \sum_{t\in F} \left( r(t) \bullet \left( \sum_{i=1}^k (c_i1)(1_1)(1_2)\cdots(1_i) \right) \right) \\
&= x \dotplus \sum_{t\in F} \left( \sum_{i=1}^k (c_i1)(f(t)1_1)(f(t)1_2)\cdots(f(t)1_i) \right) \\
&= x \dotplus \sum_{i=1}^k \left( (c_i1) \left( \left( \sum_{t\in F} f(t) \right) 1_1 \right) \cdots \left( \left( \sum_{t\in F} f(t) \right) 1_i \right) \right),
\end{align*}
where $x\in V_k(0,S)$. The domain of the map \( T_F^\eta f\) is
\[
\mathrm{Dom}(T_F^\eta f) = \pi_S^{-1}\left( \left(0, 1 - \pi ( T_F^\eta f(x) \right)\right) \cap V_k(0,S).
\]
It can be shown that $T_F^{\eta}f \circ T_G^{\eta}f(x) = T_{F \cup G}^{\eta}f(x)$ if $F \cap G = \emptyset$ for every $F,G \in P_f(\mathbb{N})$.
Now, let $r(t) = (f(t),\ldots,f(t))$ be a $k$-tuple vector, let $\{\eta_1,\ldots,\eta_m\}$ be a finite subset of $ V_k(0,S)$ and let $\mathrm{Ir} =\cap_{i=1}^m \mathrm{Ir} ({\eta}_i)$.
We define 
\[
T_i = \{ T_F^{\eta_i}f(x) : F \in P_f(\mathbb{N}), x \in \mathrm{Ir} \} \cup \mathrm{Ir}
\]
for every $i=1,\ldots,m$ and let $T_f = \bigcup_{i=1}^m T_i$. For every $i \in \{1,\ldots,m\}$ and for every $x,y\in T_i$, we define $x\dot{+}y=x+y$ if $y \in \mathrm{Ir}$, and if $x=T_F^{\eta_i}f(x_1)\in T_i$, $y=T_G^{\eta_i}f(x_2) \in T_i$ for $F,G\in P_f(\mathbb{N})$ and $x_1,x_2\in Ir$, then $x \dotplus y = x + y$ if $F \cap G = \emptyset$.

We extend operation $\ddot{+}$ on $T_f$. For every $x,y \in T_f$, we define
\[
x \ddot{+} y = x \dotplus y \quad \text{if and only if} \quad \exists i \in \{1,\ldots,m\} \quad x,y \in T_i.
\]
It is obvious that $(T_f,\ddot{+})$ is a commutative adequate partial semigroup.

Now assume that   
\[
V_f = \left\{ \left( T_F^{\eta_1}f(x), \ldots, T_F^{\eta_m}f(x) \right) : x \in T_f, F \in P_f(\mathbb{N}) \right\} \cup \Delta_{\mathrm{Ir}}^m.
\]
Also, define 
\[
I_f = \left\{ \left( T_F^{\eta_1}f(x), \ldots, T_F^{\eta_m}f(x) \right) : x \in T_f, F \in P_f(\mathbb{N}) \right\}.
\]

Now, consider the operator $\ddot{+}$ as component-wise addition on $V_f$, i.e., for two elements $u = (u_1, u_2, \dots, u_m)$ and $v = (v_1, v_2, \dots, v_m)$ in $V_f$, we define
\[
u \ddot{+} v = (u_1 \ddot{+} v_1, u_2 \ddot{+} v_2, \dots, u_m \ddot{+} v_m).
\]

\begin{lemma} \label{asli_2}
	Let $(S,\dot{+})$ be a partially near zero semigroup. Let $\{\eta_1,\ldots,\eta_m\}$ be an arbitrary non-empty finite subset of $V_k$ and let $f:\mathbb{N}\to S$ be a partially sequence. The following statements hold:\\
	(a) $(T_f,\ddot{+})$ is a commutative adequate partial subsemigroup of $(V_k(0,S),\dotplus)$.\\
	(b) $(V_f,\ddot{+})$ is a commutative adequate partial subsemigroup of $\times_{i=1}^m V_k(0,S)$.\\
	(c) $(I_f,\ddot{+})$ is an adequate partial subsemigroup of $(V_f,\ddot{+})$. Also, $I_f$ is an ideal of $V_f$ and $K(\delta V_f) = K(\delta I_f)$.\\
	(d) $\mathrm{Ir}$ is an adequate partial subsemigroup of $T_f$. and so $\delta \mathrm{Ir} = \delta T_f$.\\
	(e) Let $Y = \times_{i=1}^m \delta T_f$. Then $K(\delta V_f) = K(Y) \cap \delta V_f$.
\end{lemma}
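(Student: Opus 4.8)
The plan is to observe that Lemma~\ref{asli_2} is Lemma~\ref{asli} in disguise and to transcribe that proof, changing only the points where the sequence $f$ enters. The key is that
\[
T_F^{\eta_i}f(x) = x \dotplus r(F)\bullet\eta_i, \qquad r(F) = \left(\textstyle\sum_{t\in F}f(t),\ldots,\sum_{t\in F}f(t)\right),
\]
where, since $\sum_{n\in\mathbb{N}}f(n)<1$, every $r(F)$ lies in $\Delta_S^k$, and $F\cap G=\emptyset$ forces $r(F)+r(G)=r(F\cup G)\in\Delta_S^k$. Thus $T_f$, $V_f$, $I_f$, $\mathrm{Ir}$ are the objects $T$, $V$, $I$, $\mathrm{Ir}$ of Lemma~\ref{asli} with the continuum $\Delta_S^k$ of dilation vectors replaced by the sub-family $\{r(F):F\in P_f(\mathbb{N})\}$ (carrying the partial operation ``disjoint union of index sets''), and $\Delta_S^k$-indexed families replaced by $P_f(\mathbb{N})$-indexed ones. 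I would first record the elementary fact that makes this dictionary work: the $j$-th term $(c_j1)(s1_1)\cdots(s1_j)$ of $r(F)\bullet\eta_i$ has the same length $j$ and the same first entry $c_j$ as the $j$-th term of $\eta_i$, so for any $y\in\Gamma_k$ the $\prec$-irreducibility of $y$ against $\eta_i$ is equivalent to that against $r(F)\bullet\eta_i$. Consequently an element $y$ of $\mathrm{Ir}$ ``glides past'' the telescoped tail: $T_F^{\eta_i}f(a)+y=T_F^{\eta_i}f(a\dotplus y)$ whenever the right-hand side is defined, and then $\pi\bigl(T_F^{\eta_i}f(a)+y\bigr)=\pi\bigl(T_F^{\eta_i}f(a)\bigr)+\pi_S(y)$.

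For (a), exactly as in Lemma~\ref{asli}(a), for $x=T_F^{\eta_i}f(a)\in T_f$ (the case $x\in\mathrm{Ir}$ being easier) one obtains
\[
\pi_S^{-1}\bigl((0,1-\pi(x))\bigr)\cap\mathrm{Ir}\bigl(\{\eta_1,\ldots,\eta_m,x\}\bigr)\ \subseteq\ R_{T_f}(x),
\]
since such a $y$ lies in $\mathrm{Ir}\subseteq T_i$, is $\prec$-irreducible with $x$, and the index set of $x\ddot{+}y$ is $F\cup\emptyset=F$; note that the common right multiplier is always drawn from $\mathrm{Ir}$ — it ``has $F=\emptyset$'' — so the disjointness clause in the definition of $\ddot{+}$ is never invoked in the adequacy argument. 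Adequacy then follows, as in Lemma~\ref{asli}(a), from the finite intersection property of $\{\mathrm{Ir}(\{\mu\}):\mu\in V_k\}$ (Lemma~\ref{366}) and the density of $S$ near $0$; and, as there, for $w\in V_k(0,S)$ one picks $x\in\mathrm{Ir}$ with $\pi_S(w)<\pi_S(x)<1$, whence $R_{T_f}(x)\subseteq\pi_S^{-1}\bigl((0,1-\pi_S(w))\bigr)$ and Definition~\ref{1.11} gives that $T_f$ is an adequate partial subsemigroup of $V_k(0,S)$.

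Parts (b)--(d) are then formal. As in Lemma~\ref{asli}(b), $R_{V_f}\bigl((u_1,\ldots,u_m)\bigr)$ carries the product structure $\times_{i=1}^m R_{T_f}(u_i)$, so (a) yields that $(V_f,\ddot{+})$ is a commutative adequate partial subsemigroup of $\times_{i=1}^m V_k(0,S)$. For (c), $I_f$ is a right ideal of $V_f$: a diagonal element of $\Delta_{\mathrm{Ir}}^m$ carries index set $\emptyset$, and two elements of $I_f$ are combined by $\ddot{+}$ only when their index sets are disjoint, so the resulting index set is always a nonempty finite subset of $\mathbb{N}$ and the product lands back in $I_f$; by commutativity $I_f$ is an ideal, the same $R$-computation shows it is an adequate partial subsemigroup, and Theorem~\ref{1.23} gives that $\delta I_f$ is an ideal of $\delta V_f$ with $K(\delta V_f)=K(\delta I_f)$. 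For (d), the coinitiality argument of Lemma~\ref{asli}(d) shows $\mathrm{Ir}$ is an adequate partial subsemigroup of $T_f$; then Lemma~\ref{1.10} gives $\delta\mathrm{Ir}\subseteq\delta V_k(0,S)$, and Theorem~\ref{1.23}, applied as in Lemma~\ref{asli}(d), gives $\delta\mathrm{Ir}=\delta T_f$.

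Finally, for (e), put $Y=\times_{i=1}^m\delta T_f$. By Theorem~2.23 of \cite{Hindman}, $K(Y)=\times_{i=1}^m K(\delta T_f)$, so for $p\in K(\delta T_f)=K(\delta\mathrm{Ir})$ (using (d)) we get $\overline p=(p,\ldots,p)\in K(Y)$. To see $\overline p\in\delta V_f$, fix $x\in V_f$ and a basic neighbourhood $\times_{i=1}^m\overline{C_i}$ of $\overline p$ with $C_i\in p$; since $p\in\delta\mathrm{Ir}$ one has $\mathrm{Ir}\in p$, so one may pick $a\in\bigcap_{i=1}^m C_i\cap\mathrm{Ir}$ (chosen with $\pi_S(a)$ small, which is possible because $\pi_S^{-1}\bigl((0,\varepsilon)\bigr)\in p$ for every $\varepsilon>0$), and then $\overline a=(a,\ldots,a)\in\bigl(\times_{i=1}^m\overline{C_i}\bigr)\cap R_{V_f}(x)$ by the ``gliding past'' fact. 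Hence $\overline p\in K(Y)\cap\delta V_f\neq\emptyset$, and since $\delta V_f$ is a closed subsemigroup of $Y$, Theorem~1.65 of \cite{Hindman} yields $K(\delta V_f)=K(Y)\cap\delta V_f$. The step I expect to require the most care is the ``gliding past'' fact of the first paragraph — and, relatedly, the choice in (e) of a common right multiplier that simultaneously sits in a prescribed $p$-large set, has small $\pi_S$-value, and is $\prec$-irreducible with the base point so that $\pi$ stays additive — together with the bookkeeping in (c) showing $I_f$ is an ideal; once these are settled, the rest is a faithful copy of the proof of Lemma~\ref{asli}.
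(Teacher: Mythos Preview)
Your proposal is correct and takes precisely the same approach as the paper, whose entire proof of Lemma~\ref{asli_2} is the single line ``The proof is similar to Lemma~\ref{asli}.'' You have simply made explicit the dictionary between the two lemmas and filled in the details (the ``gliding past'' observation, the ideal bookkeeping for $I_f$, and the choice of a common diagonal right multiplier in~(e)) that the paper leaves implicit.
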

\begin{proof}
	The proof is similar to Lemma \ref{asli}.
\end{proof}

\begin{theorem} \label{5.9}
	Let $(S,\dot{+})$ be a partially near zero semigroup. Pick $k>1$. Let \( \eta = (\eta_1, \eta_2, \ldots, \eta_m) \) be an $m$-tuple in $\times_{i=1}^m V_k$ without constant term and let $f:\mathbb{N}\to S$ be a partially sequence, Then for every finite partition $\mathcal{C}$ of $V_k(0,S)$, there exist $x \in \mathrm{Ir}(\{\eta_1,\ldots,\eta_m\})$, $F \in P_f(\mathbb{N})$ and $C \in \mathcal{C}$ such that:
	\[
	\{ T_F^{\eta_1}f(x), T_F^{\eta_2}f(x), \ldots, T_F^{\eta_m}f(x) \} \subseteq C.
	\]
\end{theorem}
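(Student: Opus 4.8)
The plan is to adapt the ultrafilter argument already used for $J$‑sets (Theorem~\ref{Jset}) and for the translates $a+r\bullet\eta_i$ (the preceding theorem), now with the partial semigroups $T_f,V_f,I_f$ attached to $\{\eta_1,\dots,\eta_m\}$ and $f$ playing the roles that $E$ and $T,V,I$ played there: from a point $p$ of the smallest ideal of $\delta T_f$ build $\overline p=(p,\dots,p)$, push it into $K(\delta V_f)=K(\delta I_f)$ so that it lands in the closure of $I_f$, and then read a monochromatic configuration off the cell of $\mathcal C$ that lies in $p$. Throughout, set $\mathrm{Ir}=\mathrm{Ir}(\{\eta_1,\dots,\eta_m\})$ and $Y=\times_{i=1}^m\delta T_f$, and form $T_f,V_f,I_f$ as in the paragraph just before the statement. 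Lemma~\ref{asli_2} supplies the algebra: $(V_f,\ddot{+})$ is a commutative adequate partial subsemigroup of $\times_{i=1}^m V_k(0,S)$; $(I_f,\ddot{+})$ is an adequate partial subsemigroup of $V_f$ and an ideal of it, so $K(\delta V_f)=K(\delta I_f)$; $\delta\mathrm{Ir}=\delta T_f$, so $K(\delta T_f)=K(\delta\mathrm{Ir})$ is nonempty; and $K(\delta V_f)=K(Y)\cap\delta V_f$.

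First I would fix any $p\in K(\delta T_f)=K(\delta\mathrm{Ir})$ and put $\overline p=(p,\dots,p)$. Since $K(Y)=\times_{i=1}^m K(\delta T_f)$ by Theorem~2.23 of~\cite{Hindman}, $\overline p\in K(Y)$. Also $\overline p\in\delta V_f$, exactly as in the proof of Lemma~\ref{asli_2}(e): given $x\in V_f$ and a basic clopen neighbourhood $\times_{i=1}^m\overline{A_i}$ of $\overline p$ in $Y$ with each $A_i\in p$, one uses that $\mathrm{Ir}\in p$ (as $p\in\delta\mathrm{Ir}\subseteq\overline{\mathrm{Ir}}$) to pick $a\in\bigcap_{i=1}^m A_i\cap\mathrm{Ir}$ with $\pi_S(a)$ small enough that $x\ddot{+}(a,\dots,a)$ is defined, and then $(a,\dots,a)\in\Delta_{\mathrm{Ir}}^m\subseteq V_f$ lies in $R_{V_f}(x)\cap\times_{i=1}^m\overline{A_i}$. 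Hence $\overline p\in K(Y)\cap\delta V_f=K(\delta V_f)=K(\delta I_f)$ by Lemma~\ref{asli_2}, so in particular $\overline p\in\delta I_f\subseteq\operatorname{cl}(I_f)$.

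To finish, I would note that $p$ is an ultrafilter on $V_k(0,S)$ with $T_f\in p$, so since $\mathcal C$ is a finite partition of $V_k(0,S)$ there is a $C\in\mathcal C$ with $C\in p$; then $\times_{i=1}^m\overline C$ is a clopen neighbourhood of $\overline p$, hence it meets $I_f$. Pick $z\in I_f\cap\times_{i=1}^m\overline C$ and write $z=(T_F^{\eta_1}f(x),\dots,T_F^{\eta_m}f(x))$ with $x\in\mathrm{Ir}$ and $F\in P_f(\mathbb N)$, which is the shape of a general element of $I_f$. Since the coordinates of $z$ lie in $V_k(0,S)$, each $T_F^{\eta_i}f(x)$ lies in $\overline C\cap V_k(0,S)=C$, so $\{T_F^{\eta_1}f(x),\dots,T_F^{\eta_m}f(x)\}\subseteq C$, which is the assertion.

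The main obstacle is the middle step: getting $\overline p$ into $\delta V_f$ so that the identity $K(\delta V_f)=K(Y)\cap\delta V_f$ of Lemma~\ref{asli_2}(e) can be applied, and then transferring it to $\operatorname{cl}(I_f)$ via the ideal identity $K(\delta V_f)=K(\delta I_f)$ of Lemma~\ref{asli_2}(c). Once $\overline p\in\operatorname{cl}(I_f)$ is secured, the rest is just pigeonholing a cell of $\mathcal C$ into $p$ and unwinding the definition of $I_f$; in particular, and in contrast to Theorem~\ref{Jset}, no idempotent ultrafilter is needed here — only that $p$ belongs to the minimal ideal $K(\delta T_f)$, since the configuration $(T_F^{\eta_1}f(x),\dots,T_F^{\eta_m}f(x))$ is already a typical element of $I_f$ rather than something obtained by iteration.
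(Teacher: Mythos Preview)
Your proposal is correct and follows essentially the same route as the paper. The paper's proof is much terser---it simply invokes Lemma~\ref{asli_2} to get $K(\delta V_f)=K(Y)\cap\delta V_f$ and $K(\delta V_f)=K(\delta I_f)$, picks a point of $K(\delta V_f)$, selects $C\in\mathcal C$ lying in the underlying ultrafilter, and reads off an element of $I_f\cap\times_{i=1}^m C$---but your expanded version, which explicitly builds $\overline p$ from $p\in K(\delta T_f)$ and verifies $\overline p\in\delta V_f$ before applying the two identities, is exactly what the paper's appeal to Lemma~\ref{asli_2}(e) (whose proof mirrors that of Lemma~\ref{asli}(e)) is hiding.
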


\begin{proof}
	Define $V_f$ and $I_f$. By Lemma \ref{asli_2}, $K(\delta V_f) = K(Y) \cap \delta V_f$ and $K(\delta I_f) = K(\delta V_f)$. If $\mathcal{C}$ is a finite partition of $V_k(0,S)$, there exists $p \in K(\delta V_f)$ and $C \in \mathcal{C}$ such that $C \in p$, so $\times_{i=1}^m C \cap R_{I_f}(x) \neq \emptyset$. Therefore, there exist $a \in \mathrm{Ir}$ and $F \in P_f(\mathbb{N})$ such that $\left\{ T_F^{\eta_1}f(a), \ldots, T_F^{\eta_m}f(a) \right\} \subseteq C$. 
\end{proof}

The following theorem is a version of Van der Waerden Polynomial version near zero.

\begin{theorem}\label{thm3.11}
	Let $(S,\dot{+})$ be a partially near zero semigroup. Let $p_1,\ldots,p_m \in \mathbb{Z}[x]$ be polynomials such that $p_i(0) = 0$ and there exists $\delta > 0$ such that $p_i(x) > 0$ for every $x \in (0,\delta)$ for every $i=1,\ldots ,m$. Then for any finite partition $\mathcal{C}$ of \( S \) and every partially sequence $f$, there exist a cell $C \in \mathcal{C}$, $a \in S$, and $F \in P_f(\mathbb{N})$ such that 
	\[
	\{ a + p_i(\sum_{t \in F} f(t)) : i = 1,2,\ldots,m \} \subseteq C.
	\] 
\end{theorem}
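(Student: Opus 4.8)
The plan is to transfer the problem from $S$ up to the space $V_k(0,S)$ of symbolic polynomials, apply Theorem~\ref{5.9} there, and then push the resulting monochromatic configuration back down to $S$ along the evaluation map $\pi_S$.

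First I would make two harmless reductions. Since each $p_i$ is continuous with $p_i(0)=0$, we may shrink $\delta$ so that $0<p_i(b)<1$ for all $b\in(0,\delta)$ and all $i$; and since a tail $n\mapsto f(n+N)$ of a partially sequence is again a partially sequence with $\sum_{n}f(n+N)=\sum_{n>N}f(n)$, which can be made $<\delta$, it suffices to prove the statement for partially sequences $f$ with $\sum_n f(n)<\delta$ (an $F$ obtained for the tail is translated back by $N$). Now write $p_i(x)=\sum_{j=1}^{d_i}c_{i,j}x^j$ with $c_{i,j}\in\mathbb Z$, $d_i=\deg p_i\ge1$, set $k=\max\{2,d_1,\dots,d_m\}$ so that $k>1$, and put $c_{i,j}=0$ for $d_i<j\le k$. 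Let $\eta_i\in V_k$ be the symbolic polynomial whose length-$j$ term is $(c_{i,j}1_0)(1_1)(1_2)\cdots(1_j)$, $1\le j\le k$. Its terms have pairwise distinct lengths, hence are pairwise irreducible, so $\eta_i$ is a legitimate element of $V_k$, and each term has length $\ge1$, so $\eta=(\eta_1,\dots,\eta_m)$ is an $m$-tuple in $\times_{i=1}^m V_k$ without constant term.

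Next I would pull back the colouring. The map $\pi_S\colon V_k(0,S)\to S$ is surjective (it has the section $s\mapsto(s1_0)(1_1)$, by multiplicativity of $\pi$), so $\mathcal C'=\{\pi_S^{-1}(C):C\in\mathcal C\}$ is a finite partition of $V_k(0,S)$ into nonempty cells. Apply Theorem~\ref{5.9} to $\eta$, $f$ and $\mathcal C'$: it produces $x\in\mathrm{Ir}(\{\eta_1,\dots,\eta_m\})$, $F\in P_f(\mathbb N)$ and a cell $\pi_S^{-1}(C)\in\mathcal C'$ (with $C\in\mathcal C$) such that $\{T_F^{\eta_i}f(x):1\le i\le m\}\subseteq\pi_S^{-1}(C)$. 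The heart of the matter is then the identity $\pi_S(T_F^{\eta_i}f(x))=\pi_S(x)+p_i(\sum_{t\in F}f(t))$. Writing $b=\sum_{t\in F}f(t)$ and $r_F=(b,\dots,b)$, additivity of $\bullet$ in its first coordinate gives $T_F^{\eta_i}f(x)=x\dotplus(r_F\bullet\eta_i)$, whose length-$j$ term is $(c_{i,j}1_0)(b\,1_1)\cdots(b\,1_j)$ and is sent by $\pi$ to $c_{i,j}b^j$; as these terms still have pairwise distinct lengths, $\pi$ is additive along them and $\pi(r_F\bullet\eta_i)=\sum_{j=1}^k c_{i,j}b^j=p_i(b)$. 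Since $\bullet$ changes neither $\iota$ nor $l$ of any term, $x\in\mathrm{Ir}(\{\eta_i\})$ forces $x\in\mathrm{Ir}(\{r_F\bullet\eta_i\})$, so one further use of additivity of $\pi$ yields $\pi(T_F^{\eta_i}f(x))=\pi(x)+p_i(b)$. Putting $a=\pi_S(x)\in S$ and applying $\pi_S$ to the inclusion above gives $\{a+p_i(b):1\le i\le m\}\subseteq\pi_S(\pi_S^{-1}(C))\subseteq C$, which is exactly the assertion for this $a$, this $F$, and this $C$.

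The step I expect to be the main obstacle is making the interface between the symbolic algebra of $V_k$ and the evaluation $\pi_S$ watertight. One must verify that the hypothesis $x\in\mathrm{Ir}$ is precisely what makes the symbolic sum $x\dotplus(r_F\bullet\eta_i)$ reduced between the terms coming from $x$ and those coming from $\eta_i$, so that $\pi$ is genuinely additive there and the monomials $b^j$ appear with the correct integer coefficients; this is routine given the defining properties of $\pi$, $\bullet$ and $\mathrm{Ir}$, but it is where the bookkeeping lives. One must also check that the configuration supplied by Theorem~\ref{5.9} really lies in $V_k(0,S)$, i.e.\ that each $a+p_i(b)$ lands in $S\subseteq(0,1)$: this is where the normalisation $0<p_i(b)<1$ enters, since it makes $a+p_i(b)$ positive, keeps it below $1$ whenever $\pi_S(x)$ is not too close to $1$, and — as $p_i\in\mathbb Z[x]$ and $b\in S$ — keeps it in the ambient ring; that is exactly the domain on which $T_F^{\eta_i}f$ is defined and on which Theorem~\ref{5.9} (through Lemma~\ref{asli_2}) operates. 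Once these compatibility points are settled, the rest is a direct appeal to the apparatus already built.
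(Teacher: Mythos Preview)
Your proposal is correct and follows essentially the same route as the paper: encode each $p_i$ by a symbolic polynomial $\eta_i\in V_k$, pull the partition of $S$ back along $\pi_S$ to a partition of $V_k(0,S)$, invoke Theorem~\ref{5.9}, and then compute $\pi(T_F^{\eta_i}f(x))=\pi(x)+p_i\bigl(\sum_{t\in F}f(t)\bigr)$ to push the monochromatic configuration down to $S$. The only substantive differences are that you are more explicit about the bookkeeping the paper leaves implicit---the tail reduction ensuring $\sum_n f(n)<\delta$, the choice $k\ge2$ so that the hypothesis $k>1$ of Theorem~\ref{5.9} is met even when all $p_i$ are linear, and the verification that $x\in\mathrm{Ir}$ is exactly what makes $\pi$ additive across $x\dotplus(r_F\bullet\eta_i)$---none of which changes the architecture of the argument.
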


\begin{proof}
	Let $p_j(x) = \sum_{i=1}^{k_j} a_{i j} x^{i}$ for $j=1,\ldots,m$ and let $k = \max\{k_1,\ldots,k_m\}$. Pick $f:\mathbb{N}\to S$ such that $\sum_{n \in \mathbb{N}} f(n) < 1$ and define 
	\[
	\eta_j = \sum_{i=1}^{k_j} (a_{i j} 1_0) (1_1) \cdots (1_i) \in V_k.
	\] 
	Now assume that $\mathcal{C}$ is a finite partition for $S$. Then $\{\pi^{-1}(C) : C \in \mathcal{C}\}$ is a finite partition for $V_k(0,S)$. By Theorem \ref{5.9}, there exist $x \in \mathrm{Ir}(\{\eta_1,\ldots,\eta_m\})$ and $F \in P_f(\mathbb{N})$ such that for some $C \in \mathcal{C}$, implies that
	\[
	\{ T_F^{\eta_1}f(x), T_F^{\eta_2}f(x), \ldots, T_F^{\eta_m}f(x) \} \subseteq \pi^{-1}(C).
	\]
	This implies that $\{\pi(T_F^{\eta_1}f(x)), \pi(T_F^{\eta_2}f(x)), \ldots, \pi(T_F^{\eta_m}f(x))\} \subseteq C$. Since 
	\begin{align*}	
	\pi(T_F^{\eta_j}f(x)) &= \pi\left( x + \sum_{i=1}^k \left( (a_{i j} 1_0) \left( \left( \sum_{t \in F} f(t) \right) 1_1 \right) \cdots \left( \left( \sum_{t \in F} f(t) \right) 1_i \right) \right) \right) \\
	&= \pi(x) + \sum_{i=1}^{k_j} a_{i j} \left( \sum_{t \in F} f(t) \right)^i \\
	&= \pi(x) + p_j\left( \sum_{t \in F} f(t) \right).
	\end{align*}
	Let $a = \pi(x)$, so we have:
	\[
	\{ a + p_i(\sum_{t \in F} f(t)) : i = 1,2,\ldots,m \} \subseteq C.
	\]
\end{proof}
You can find some results of the Theorem  \ref{thm3.11} in a special case in \cite{De}. For example, see Theorems 2.3 and 2.4 follow easily from the Theorem  \ref{thm3.11}.


\noindent Ghadir Ghadimi\\  
ghadirghadimi@gmail.com\\
\noindent {\small
	\noindent University of Guilan\\
	Department of Pure Mathematics,	Faculty of Mathematical Sciences\\
	Rasht, Guilan, Iran.
}\bigskip

\noindent M. A. Tootkaboni (corresponding author)\\
tootkaboni.akbari@gmail.com\\
\noindent {\small
	\noindent University of Guilan\\
	Department of Pure Mathematics,	Faculty of Mathematical Sciences\\
	Rasht, Guilan, Iran.
}\bigskip

\end{document}